\newcommand{\lab}[1]{\label{#1}} 
\newenvironment{Quote}{\list{}{\leftmargin=1.1cm\rightmargin=1.1cm}\item[]}{\endlist}
\newcommand{\rational}{\ensuremath {\mathbb Q} }
\newcommand{\real}{\ensuremath {\mathbb R} }
\newcommand{\eps}{\epsilon} 
\newcommand{\cal}{\mathcal} 
\newcommand{\be}{\begin{equation}}
\newcommand{\ee}{\end{equation}}
\newcommand{\bea}{\begin{eqnarray}}
\newcommand{\eea}{\end{eqnarray}}
\newcommand{\bean}{\begin{eqnarray*}}
\newcommand{\eean}{\end{eqnarray*}}
\newcommand\eqn[1]{(\ref{#1})}
\newcommand{\bel}[1]{\be\lab{#1}}
\newtheorem{thm}{Theorem}
\newtheorem{lem}[thm]{Lemma}
\newtheorem{cor}[thm]{Corollary}
\newtheorem{prop}[thm]{Proposition}
\author[B. Kolesnik]{Brett Kolesnik}
\author[N. Wormald]{Nick Wormald}
\address{Brett Kolesnik
\hfill\break Department of Mathematics, University of British Columbia
}
\email{\href{mailto:bkolesnik@math.ubc.ca}{bkolesnik@math.ubc.ca}}
\address{Nick Wormald
\hfill\break Department of Mathematical Sciences, Monash University
}
\email{\href{mailto:nick.wormald@monash.edu}{nick.wormald@monash.edu}}
\begin{document}

\title[Isoperimetric numbers of random regular graphs]
{Lower bounds for the isoperimetric numbers of random regular graphs}

\begin{abstract}
The vertex isoperimetric number of a graph $G=(V,E)$
is the minimum of the ratio $|\partial_{V}U|/|U|$
where $U$ ranges over all nonempty subsets of $V$ with $|U|/|V|\le u$
and $\partial_{V}U$ is the set of all vertices adjacent to $U$ but
not in $U$. The analogously defined edge isoperimetric number\textemdash{}with 
$\partial_{V}U$ replaced by $\partial_{E}U$, the set of all
edges with exactly one endpoint in $U$\textemdash{}has been studied extensively.
Here we study random regular graphs. For the case $u=1/2$, we give 
asymptotically almost sure lower bounds for the vertex isoperimetric 
number for all $d\ge3$. Moreover, we obtain a lower bound on the 
asymptotics as $d\to\infty$. We also provide asymptotically almost sure 
lower bounds on $|\partial_{E}U|/|U|$ in terms of an upper bound on 
the size of $U$ and analyse the bounds as $d\to\infty$.
\end{abstract}

\maketitle

\section{Introduction}\lab{s:intro}

\thispagestyle{empty}
In this paper we consider versions of the isoperimetric number of random regular graphs. 
These are explicit indicators of the notion generally called expansion  
(see below for the relevant definitions). Random regular graphs give nondeterministic 
examples of expander graphs, and as mentioned in~\cite[Section 4.6]{HLW}, there is 
great interest in the edge and vertex expansion of sets of varying sizes. Here we 
obtain explicit bounds on the expansion of sets with given size  in random regular graphs. 
We concentrate on the vertex version, which is more difficult and less well studied 
than the edge version.

Let  $G$ be a graph on $n$ vertices. For a subset $U$ of its vertex set $V=V(G)$, let 
$\partial_{V }(U)$ denote the set of all vertices adjacent to a vertex in $U$ but not in $U$. 
Similarly, let $\partial_{E }(U)$ denote the set of
all edges with exactly one end  in   $U$. 
Note that $|\partial_{V }(U)|\leq|\partial_{E }(U)|$.
For any $0<u\le1/2$ the 
\emph{$u$-edge isoperimetric
number} is defined as
\[
i_{E,u}(G)=\min_{|U|\leq un}|\partial_{E}(U)|/|U|,
\]
and likewise for any $0<u\le1$  the \emph{$u$-vertex isoperimetric number} 
of $G$ as 
\[
i_{V,u}(G)=\min_{|U|\leq un}|\partial_{V }(U)|/|U|.
\]

The $1/2$-edge ($1/2$-vertex) isoperimetric number is often referred
to as the {\em edge (vertex) isoperimetric number}, and in this case, we simplify 
notation as $i_E(G)$ and $i_V(G)$. For the edge version, this makes immediate 
sense since a lower bound on the number of edges joining $S\subseteq V $ to 
$V\setminus S$ is obtained as $i_{E}(G)\rho  $ where $\rho=\min\{|S|,|V\setminus S|\} $. 
For the vertex isoperimetric number, the situation is not quite so symmetrical since 
$|\partial_{V }(U)|\ne |\partial_{V }(V\setminus U)|$ in general. However, $u=1/2$  has 
some uses. Note that the iterated neighbourhoods of any vertex in $G$ expand by (at least) 
a  factor $\alpha = 1+i_{V}(G)$ until they reach size $n/2$ (or more). Hence an easy upper 
bound on the diameter of $G$ is $2\log_\alpha(n/2)$.
To give another example, Sauerwald and Stauffer~\cite{SS11} recently showed that if a 
certain rumour spreading process takes place on a regular graph, where informed 
vertices randomly select a neighbour to inform (i.e., the push model), then all vertices are 
informed asymptotically almost surely (a.a.s.) after $O((1/i_{V})\cdot\log^5{n})$ steps 
of the process. Hence a lower bound on $i_{V}$ gives a upper bound (holding a.a.s.) 
for the time at which all vertices are aware of the rumour. 

A graph is $d$-regular if all its vertices are of degree $d$. Let
${\cal G}_{n,d}$ denote the uniform probability space on the set
of all $d$-regular graphs on $n$ vertices that are simple 
(i.e., have no loops or multiple edges). 
A property holds a.a.s.\ in a sequence
of probability spaces on $\{\Omega_{n}\} $ if
the probability that an element of $\Omega_{n}$
has the given property converges to 1 as $n\to\infty$.
Define
\[
i_{V,u}(d)=\sup\left\{ \ell:i_{V,u}({\cal G}_{n,d})\geq \ell\mbox{  a.a.s.}\right\}
\]
and define $i_{E,u}(d)$ similarly. In the case $u=1/2$, we simply write 
$i_V(d)$ and $i_E(d)$.

We can now describe our results more explicitly. Our main purpose is to provide 
asymptotically almost sure lower bounds for the vertex expansion of random 
regular graphs.

In Section~\ref{s:history}, we highlight some results in the literature that relate to 
vertex and edge expansion of regular graphs. 

The pairing model, as used by Bollob{\'a}s~\cite{Bol88} to investigate $i_{E}(d)$, 
i.e., for the case of edge expansion, is discussed in Section~\ref{s:models}. 
As we shall see, this model is also helpful for studying $i_{V,u}(d)$. 

In Section~\ref{s:vertparam}, we introduce the method we use. In short, we obtain 
lower bounds on vertex expansion using the first moment method: for a sequence 
of non-negative, integer-valued random variables $\{X_n\}$, provided that 
${\bf E}(X_n)\to0$ as $n\to\infty$, it follows that $X_n=0$ a.a.s.\ For an introduction 
to probabilistic techniques in discrete mathematics, refer to~\cite{AS08}, for instance. 
In~\cite{Bol88}, the first moment method easily provides lower bounds on $i_{E}(d)$. 
However, in bounding $i_{V,u}(d)$, the method yields bounds which are initially quite 
opaque. The main complication is that both $\partial_V$ and $\partial_E$ need to 
be considered. For a sequence $u(n)\to u_0\in[0,1/2]$ and numbers $s$ and $y$, 
we define a random variable $X$ that counts the number of subsets $U$ with  
$|U|=un$, $|\partial_V|=sn$ and $|\partial_E|=yn$ in a graph from ${\cal G}_{n,d}$.  
For such a sequence, we use the first moment method to determine the range of 
$s$ for which $X=0$ a.a.s.\ This leads us to define a 
\emph{$u$-vertex expansion number} $I_{V,u}(d)$,
which can be thought of as an asymptotic profile for the vertex expansion of subsets 
$U\subset V({\cal G}_{n,d})$ with $|U|\sim un$.
(See Section~\ref{s:vertparam} for the precise definition.)

We obtain lower bounds $A_d(u)$ on $I_{V,u}(d)$ in Section~\ref{s:vertparam}, 
and in Table~\ref{tablev} we provide approximate values for $A_d(u)$ for several 
values of $d$ and $u$. Since one expects the isoperimetric number of a graph to 
be obtained by larger sets, it is reasonable to conjecture that the $u$-isoperimetric 
and expansion numbers coincide, i.e., $i_{V,u}(d)=I_{V,u}(d)$, and hence, for all 
$d\ge3$ and $u\in(0,1/2]$, $i_{V,u}(d)\ge A_d(u)$. Unfortunately, the formulae do 
not seem to have a convenient explicit form,
and so, this is not straightforward to show for the cases $u<1/2$.  
We plan to address $i_{V,u}(d)$ for $u<1/2$ at a later time. 

In the present work, we deal with the case $u=1/2$. In Section~\ref{s:uhalf} we 
obtain explicit lower bounds on the vertex isoperimetric number $i_{V}(d)$ 
for all $d\ge3$. 
 
\begin{thm} \label{t:mainvertexhalf}
For $d\ge 3$, 
\[
i_{V}(d)\ge A_d(1/2)=2s_d
\]
where $s_d$ is the smallest positive solution to
\[
(2^d-1)^{s}=2^{d/2+s-1}(1-2s)^{1/2-s}{s}^{s}.
\]
\end{thm}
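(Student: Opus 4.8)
The plan is to run the first moment method in the pairing model, following the framework of Section~\ref{s:vertparam}. Since a random pairing on $dn$ points produces a simple graph with probability bounded away from $0$ (converging to $e^{-(d^2-1)/4}$), it suffices to bound the expected number of ``witnesses to poor expansion'' in the pairing model. Fix $u\le 1/2$, and for a set $U$ with $|U|=un$ put $S=\partial_V(U)$ and $W=V\setminus(U\cup S)$; the conditions defining $S$ and $W$ are that no edge joins $U$ to $W$ and that every vertex of $S$ is incident to at least one edge to $U$ (and then $|\partial_E(U)|$ is the number of $U$--$S$ edges). Writing $|S|=sn$, $|\partial_E(U)|=yn$ and letting $X_{u,s,y}$ count such $U$, a direct count in the pairing model gives
\begin{align*}
{\bf E}(X_{u,s,y})={}&\binom{n}{un}\binom{(1-u)n}{sn}\binom{dun}{yn}(yn)!\,[z^{yn}]\big((1+z)^d-1\big)^{sn}\\
&{}\times\frac{M(dun-yn)\,M(d(1-u)n-yn)}{M(dn)},
\end{align*}
where $M(2m)=(2m-1)!!$ is the number of perfect matchings of $2m$ points: the binomials choose $U$ and $S$, the coefficient extraction records that each of the $sn$ vertices of $S$ selects a nonempty subset of its $d$ points to be matched into $U$, the factor $\binom{dun}{yn}(yn)!$ matches those points to points of $U$, and the two $M$-factors pair up the remaining points inside $U$ and outside $U$.

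Next I would apply Stirling's formula together with the saddle-point estimate $\tfrac1n\log[z^{yn}]\big((1+z)^d-1\big)^{sn}\to \lambda(s,y,d):=\inf_{z>0}\big(s\log((1+z)^d-1)-y\log z\big)$ to obtain $\tfrac1n\log{\bf E}(X_{u,s,y})\to F(u,s,y,d)$ for an explicit function $F$ (the $n\log n$ terms cancel). Since the number of admissible triples $(u,s,y)$ is polynomial in $n$, we have ${\bf E}(X)=\sum{\bf E}(X_{u,s,y})\to0$, and hence a.a.s.\ no such witness exists, as soon as $F<0$ on the region in question. Specializing to $u=1/2$, the factors $M(dun-yn)$ and $M(d(1-u)n-yn)$ coincide and cancel against part of the remaining terms; all $y$-dependence then lies in $\lambda$, whose maximum over $y$ is attained at the saddle value $z=1$, i.e.\ at $y_d=ds\,2^{d-1}/(2^d-1)$, which lies in the feasible interval $[s,ds]$, where $\lambda=s\log(2^d-1)$. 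A short computation then yields
\[
\max_y F(1/2,s,y,d)=\tfrac{1-d}{2}\log 2 - s\log s - (\tfrac12-s)\log(\tfrac12-s)+s\log(2^d-1),
\]
which equals $\tfrac{2-d}{2}\log 2<0$ in the limit $s\to0^+$ and, on exponentiating, vanishes precisely on the curve $(2^d-1)^s=2^{d/2+s-1}(1-2s)^{1/2-s}s^s$. Taking $s_d$ to be its smallest positive root, we get $\max_y F(1/2,s,y,d)<0$ for $0<s<s_d$; since the expansion ratio of a set of size $n/2$ with boundary $sn$ is $2s$, this shows $A_d(1/2)=2s_d$ and that a.a.s.\ no set of size exactly $n/2$ has vertex boundary smaller than $s_dn$.

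It remains to handle sets $U$ with $|U|=un<n/2$: one must show $F(u,s,y,d)<0$ for every $0<u\le 1/2$, every feasible $y$, and every $0<s<2s_du$ — the regime in which $|\partial_V(U)|/|U|$ would lie below $A_d(1/2)=2s_d$. This is the crux. I would reduce it to monotonicity in $u$: any such $U$ has $|W|=(1-u-s)n>(1-u(1+2s_d))n\ge(\tfrac12-s_d)n$, so $W$ is large, and one aims to show that $\widehat G(u,s):=\max_yF(u,s,y,d)$ is nondecreasing in $u$ on $(0,1/2]$ for the relevant values of $s$, so that $\widehat G(u,s)\le \widehat G(1/2,s)<0$ whenever $s<2s_du\le s_d$. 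By the envelope theorem at the optimal $y$, this monotonicity amounts to
\[
\partial_u F=\log\frac{1-u-s}{u}+\frac d2\log\frac{d^2u^2}{(du-y)(d(1-u)-y)}\ \ge\ 0
\]
on the region. For $u$ bounded away from $1/2$, $F$ is in fact very negative throughout $0<s<2s_du$ (a constant-fraction set with so few boundary vertices is far from typical in ${\cal G}_{n,d}$), so the bound is not even needed there; near $u=1/2$ the monotonicity reduces to an inequality between $s_d$ and $y_d$ that follows from the defining equation for $s_d$. Making this uniform — turning the intuition ``extremal sets have size $\sim n/2$'' into a proof — is the main obstacle; the rest is bookkeeping and standard pairing-model arguments. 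Combining the two regimes, for every $\epsilon>0$ a.a.s.\ every $U$ with $|U|\le n/2$ satisfies $|\partial_V(U)|/|U|\ge 2s_d-\epsilon$, and the definition of $i_V(d)$ gives $i_V(d)\ge 2s_d=A_d(1/2)$.
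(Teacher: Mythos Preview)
Your setup and the $u=1/2$ computation are correct and match the paper, but the proof is incomplete: the step you flag as ``the main obstacle'' --- showing that no $U$ with $|U|<n/2$ and $|\partial_V U|/|U|<2s_d$ can occur --- is precisely the content of the paper's proof of this theorem, and you have not carried it out. Your proposed route, proving that $\widehat G(u,s)=\max_y F(u,s,y,d)$ is nondecreasing in $u$ on $(s/(2s_d),1/2]$ for each fixed $s$, is not established; the heuristic ``$F$ is very negative when $u$ is bounded away from $1/2$'' is not a proof, and the envelope calculation you display does not obviously have a sign on the whole region.

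The paper takes a different and more tractable route. Rather than working with the saddle-point $x_0$ and the $y$-optimiser, it \emph{fixes} $x=1$ throughout (this is an upper bound for every $u$, not only $u=1/2$) and observes that for $u<1/2$ one has $\partial f_d(u,s,y,1)/\partial y=\tfrac12\log\frac{du-y}{d(1-u)-y}<0$, so the maximum over $y$ is attained at the smallest feasible value $y=s$. This collapses everything to the two-variable function $g_d(u,r):=f_d(u,ru,ru,1)$ with $r=s/u$, and the task becomes showing $g_d(u,r)<0$ on $\{0<u<1/2,\ 0\le r<2s_d\}$. The paper does this by a region-by-region second-derivative analysis in $u$: on part of the region $g_d(\cdot,r)$ is strictly convex, on the remainder it is strictly increasing, and since $g_d(0^+,r)=0$ and $g_d(1/2,r)<0$ (the latter being your $u=1/2$ computation), the conclusion follows. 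The two simplifications --- taking $x=1$ and then $y=s$ --- are what make the calculus manageable; by insisting on the optimal $x$ and $y$ you are left with an implicitly defined function whose monotonicity is far from obvious. Note also that parametrising by $r=s/u$ rather than by $s$ is what lines up the boundary behaviour ($g_d\to0$ as $u\to0^+$) needed for the convexity argument.
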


Table~\ref{tablevhalf} provides approximate values for $A_d(1/2)$ for several 
values of $d$.

In Section~\ref{s:asymp} we apply Theorem~\ref{t:mainvertexhalf} to obtain a 
lower bound on the asymptotics of $i_V(d)$ as $d\to\infty$.

\begin{cor} \label{t:1/2asymp}
As $d\to\infty$,
\[
i_V(d)\ge 1-2/d +O((\log{d})/d^2).
\]
\end{cor}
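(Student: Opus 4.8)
The plan is to read off the asymptotics directly from the defining equation in Theorem~\ref{t:mainvertexhalf}. Since $i_V(d)\ge A_d(1/2)=2s_d$, it suffices to prove $s_d\ge\tfrac12-\tfrac1d-O((\log d)/d^2)$. Taking natural logarithms, $s_d$ is a root in $(0,1/2)$ of
\[
h(s):=s\ln(2^d-1)-\bigl(\tfrac d2+s-1\bigr)\ln 2-\bigl(\tfrac12-s\bigr)\ln(1-2s)-s\ln s .
\]
A one-line computation gives $h'(s)=\ln\bigl((2^d-1)(1-2s)/(2s)\bigr)$, which is positive exactly on $\bigl(0,\tfrac12-2^{-(d+1)}\bigr)$ and negative on $\bigl(\tfrac12-2^{-(d+1)},\tfrac12\bigr)$. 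Since $h(0^+)=-(\tfrac d2-1)\ln 2<0$ and $h(\tfrac12)=\tfrac12\ln(2^d-1)-(\tfrac d2-1)\ln 2>0$ for $d\ge3$, it follows that $h$ has a unique root in $(0,1/2)$, that this root is $s_d$ and lies in $\bigl(0,\tfrac12-2^{-(d+1)}\bigr)$, and that $h$ is strictly increasing there. In particular, for any $s^\ast$ in that interval, $h(s^\ast)<0$ implies $s^\ast<s_d$. (This also confirms that $s_d$ is genuinely the smallest positive solution of the equation.)

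It remains to exhibit such an $s^\ast$ of the form $\tfrac12-\tfrac1d-\Theta((\log d)/d^2)$. Writing $s=\tfrac12-\delta$, using $\ln(2^d-1)=d\ln 2+O(2^{-d})$ and the Taylor expansion $\bigl(\tfrac12-\delta\bigr)\ln\bigl(\tfrac12-\delta\bigr)=-\tfrac12\ln 2+(\ln 2-1)\delta+O(\delta^2)$, one gets
\[
h\bigl(\tfrac12-\delta\bigr)=\ln 2-(\ln 2)\,d\delta-\delta\ln\delta+O(\delta)+O(2^{-d}).
\]
Now take $\delta=\tfrac1d+C\,(\log d)/d^2$ with a fixed constant $C>1/\ln 2$, say $C=2$. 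Then $(\ln 2)\,d\delta=\ln 2+C(\ln 2)(\log d)/d$, while $-\delta\ln\delta=(1+o(1))(\log d)/d$, so
\[
h\bigl(\tfrac12-\delta\bigr)=\bigl(1-C\ln 2+o(1)\bigr)\frac{\log d}{d}+O\!\bigl(\tfrac1d\bigr)<0
\]
for all sufficiently large $d$. As $\tfrac12-\delta<\tfrac12-2^{-(d+1)}$, the first paragraph gives $s_d>\tfrac12-\tfrac1d-C(\log d)/d^2$, hence
\[
i_V(d)\ge 2s_d>1-\frac2d-\frac{2C\log d}{d^2}=1-\frac2d+O\!\left(\frac{\log d}{d^2}\right).
\]

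The only real work is the asymptotic expansion of $h(\tfrac12-\delta)$, which must be carried out to accuracy $O((\log d)/d)$: the leading balance is between the constant $\ln 2$ and $(\ln 2)\,d\delta$, and it is exactly the $-\delta\ln\delta$ term that prevents $\delta$ from being as small as $1/d$ and forces the correction of order $(\log d)/d^2$---this is the source of the logarithmic term in the statement. The explicit formula for $h'$ then makes the final comparison purely one-sided, so no matching upper bound on $s_d$ is needed.
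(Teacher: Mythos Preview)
Your proof is correct and follows essentially the same approach as the paper: both analyse the defining equation from Theorem~\ref{t:mainvertexhalf} (equivalently, $f_d(1/2,s,\cdot,1)=0$) by expanding around $s=1/2$ and identifying the balance $\delta\sim 1/d$ with a $(\log d)/d^2$ correction coming from the $-\delta\ln\delta$ term. Your explicit computation of $h'(s)$ to obtain monotonicity and turn the asymptotics into a clean one-sided comparison is a nice touch that makes the argument slightly more self-contained than the paper's two-step bootstrap, but the underlying idea is the same.
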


Corollary~\ref{t:1/2asymp} improves upon the information that is otherwise 
available from spectral results. See Section~\ref{s:history} for a discussion on this.

We switch our attention to the edge isoperimetric number in Section~\ref{s:edge}. 
Bollob\'as~\cite{Bol88} computed lower bounds with the first moment method for 
$i_{E}(d)$, i.e., for the case of edge expansion at $u=1/2$, for all $d\ge3$. Therein 
it is shown that for sufficiently large $d$,
\bel{eAsymt} 
i_{E}(d)\ge d/2-\sqrt{d\log2}\ee
and so
\[
\lim_{d\to\infty}i_{E}(d)/d=1/2.
\]
Further, it is noted that for $n\ge d+1$, if $G$ is a $d$-regular graph in $n$ vertices 
and $U$ is selected uniformly from $\{1,2,\ldots,n\}$ such that $|U|=\lfloor n/2\rfloor$, 
then
\[
{\bf E}(\partial_V U)=d\lfloor n/2\rfloor\lceil n/2\rceil/(n-1).
\]
Hence, the lower bound \eqn{eAsymt} is, 
as stated in~\cite{Bol88}, `essentially best possible' for large $d$.
Then, more generally, it is claimed in~\cite{Bol88} that
\[
\lim_{d\to\infty}i_{E,u}(d)/d=1-u
\]
for all $0<u<1/2$; however, no explicit lower bounds are given. 
Our contribution is to find lower bounds on $i_{E,u}(d)$ for all $d\ge3$ and $0<u\le1/2$. 
The argument is similar to that of~\cite{Bol88} in that we compute the expected number of 
sets in a random regular graph with an edge boundary of a given size and then apply the 
first moment method.  However, our use of 
the \emph{$u$-edge expansion number} $I_{E,u}(d)$ (analogous to $I_{V,u}(d)$) 
yields a simple method for computing `best possible' lower bounds on $i_{E,u}(d)$ for 
all $d\ge3$ and $0<u\le1/2$. The following result is proved in Section~\ref{s:edge}.

\begin{thm} \label{t:eThm}
For $d\ge3$ and $u\in(0,1/2]$,
\[
i_{E,u}(d)\ge \widehat A_d(u)= y_{d,u}/u
\]
where $y_{d,u}$ is the smallest positive solution to
\[
d^{d/2}u^{(d-1)u}(1-u)^{(d-1)(1-u)}=(du-y)^{(du-y)/2}(d-du-y)^{(d-du-y)/2}y^y.
\]
\end{thm}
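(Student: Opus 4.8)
The plan is to carry out the first-moment computation in the pairing (configuration) model, which controls $\mathcal{G}_{n,d}$ by standard contiguity/conditioning arguments. Fix $u \in (0,1/2]$ and write $U \subset V$ with $|U| = un$ (assume $un$ integral; the general case follows by a routine rounding argument since we only seek a lower bound on $i_{E,u}$). For a target boundary size $|\partial_E(U)| = yn$, I would define the random variable $X = X_n$ counting pairs $(U, \text{configuration})$ with $|U| = un$ and $|\partial_E(U)| = yn$ in a random pairing on $dn$ points. The number of ways to choose $U$ is $\binom{n}{un}$. Having fixed $U$, the $dun$ points inside $U$ must contribute exactly $yn$ cross-pairs and $(dun - yn)/2$ internal pairs, while the $d(1-u)n$ points outside contribute the matching $yn$ cross-pairs and $(d(1-u)n - yn)/2$ internal pairs; the number of such configurations is a product of binomials for choosing which points are matched across, a factor $(yn)!$ for the cross-matching, and two double-factorials $(\cdot - 1)!!$ for the internal matchings, all divided by the total $(dn-1)!!$. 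So $\mathbf{E}(X_n)$ is an explicit product of binomials and (double-)factorials.

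Next I would apply Stirling's formula to extract the exponential rate: $\mathbf{E}(X_n) = \mathrm{poly}(n) \cdot c^n$ where $\log c = \log c(u,y,d)$ is an explicit sum of entropy-type terms $H(p) = -p\log p - (1-p)\log(1-p)$ evaluated at the relevant densities, together with the contributions $\tfrac{k}{2}\log k$ coming from each double factorial $(2m-1)!! \sim (2m)!/(2^m m!)$, i.e. $\log((2m-1)!!) = m\log(2m) - m + o(m)$ giving a $\tfrac{1}{2}(2m)\log(2m)$-type term. Collecting everything and simplifying (the binomial $\binom{dn-1}{yn}$-type factors combine with the double-factorial normalizations), the condition $c(u,y,d) < 1$ should reduce, after exponentiating and clearing denominators, exactly to
\[
d^{d/2}u^{(d-1)u}(1-u)^{(d-1)(1-u)} > (du-y)^{(du-y)/2}(d-du-y)^{(d-du-y)/2}y^y,
\]
which is the stated defining relation with $=$ replaced by the inequality that makes the right side smaller. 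Since the right-hand side, as a function of $y \in (0, du)$, starts at a larger value than the left at $y \to 0^+$ (where $y^y \to 1$ and the other factors are bounded) — wait, one must check monotonicity — the relevant fact is that $y \mapsto (\text{RHS})/(\text{LHS})$ crosses $1$ at $y_{d,u}$, the smallest positive solution, and for $y < y_{d,u}$ we have $c < 1$. I would verify (by differentiating $\log(\text{RHS})$ in $y$, which gives $\tfrac12\log y^2 - \tfrac12\log((du-y)(d-du-y)) = \tfrac12\log\!\big(y^2/((du-y)(d-du-y))\big)$) that $\log(\text{RHS})$ is increasing then decreasing, so $\text{RHS}(y) < \text{LHS}$ precisely on an interval $(0, y_{d,u})$ containing small $y$; then for every such $y$, $\mathbf{E}(X_n) \to 0$, so a.a.s.\ no set $U$ with $|U| = un$ has $|\partial_E(U)| = yn$.

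To finish, I would argue that summing over all integer values $yn < y_{d,u} n$ (only $O(n)$ of them, each with $\mathbf{E} \to 0$ at an exponential rate bounded uniformly away from $1$ on $[\varepsilon, y_{d,u} - \varepsilon]$, with the endpoints handled by continuity and the observation that near $y = 0$ the count of small $U$ is genuinely controlled separately — e.g.\ $|U|$ bounded, or $|U| = \Theta(n)$ forces $|\partial_E(U)| = \Theta(n)$ in a connected expander, a fact one can bootstrap or handle via the $d$-regularity and a crude deterministic bound) still tends to $0$. Passing from the pairing model to $\mathcal{G}_{n,d}$ costs only a constant factor $e^{O(1)}$ in probabilities of events that are monotone or of the relevant type, so the conclusion $i_{E,u}(\mathcal{G}_{n,d}) \ge y_{d,u}/u - o(1)$ holds a.a.s., giving $i_{E,u}(d) \ge \widehat{A}_d(u) = y_{d,u}/u$. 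The main obstacle I anticipate is purely bookkeeping: correctly assembling the Stirling expansion so that the lower-order binomial factors cancel and the inequality collapses to exactly the clean two-sided form in the statement, and separately the small-$U$ regime near $y \to 0$, where the first-moment bound degenerates and one needs a supplementary argument (e.g.\ that constant-size sets trivially have edge-expansion $\ge d - (|U|-1) \ge$ the required bound, so they do not violate the claim) — this is where a careless treatment would leave a gap.
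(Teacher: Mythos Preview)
Your first-moment setup in the pairing model is correct and matches the paper's, and the reduction of the exponential rate to the equation defining $y_{d,u}$ is the right computation. However, you have missed the main difficulty of the theorem, which is where essentially all of the paper's work goes.

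You fix $|U|=un$ throughout and only sum over $y$. But $i_{E,u}(G)$ is a minimum over \emph{all} $|U|\le un$, so to conclude $i_{E,u}(d)\ge \widehat A_d(u)$ you must show that for every $w\in(0,u]$, sets of size $wn$ a.a.s.\ have edge expansion at least $\widehat A_d(u)$ --- not merely at least $\widehat A_d(w)$. Equivalently, writing $g_d(r,w)=\widehat f_d(w,rw)$, you need $g_d(r,w)<0$ on the whole rectangle $\{0\le r<\widehat A_d(u),\ 0<w\le u\}$, not just on the single segment $w=u$. Your closing remarks conflate ``$y\to 0$ for fixed $|U|=un$'' with ``small $U$'', and the suggested patches (constant-size sets have expansion $\ge d-(|U|-1)$; $|U|=\Theta(n)$ forces $|\partial_E U|=\Theta(n)$) say nothing about whether the \emph{ratio} $|\partial_E U|/|U|$ for $|U|=wn$, $0<w<u$, meets the specific threshold $\widehat A_d(u)$.

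The paper handles this by analysing $g_d(r,w)$ as a function of $w$ for each fixed $r$: computing $\partial^2 g_d/\partial w^2$, splitting the rectangle along the curve where this second derivative vanishes, and showing $g_d$ is convex in $w$ on one side and strictly increasing in $w$ on the other. Combined with $g_d(r,0^+)=0$ and $g_d(r,u)<0$ (the latter being exactly the $w=u$ case you did treat), this forces $g_d<0$ throughout. Verifying monotonicity on the second region requires checking that $\partial g_d/\partial w\mid_{w=u}>0$ on a certain interval of $r$, which itself takes a concavity-plus-endpoints argument. None of this is routine bookkeeping; it is the substance of the proof, and your proposal does not contain it.

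A smaller point: your claim that $\log(\mathrm{RHS})$ is ``increasing then decreasing'' in $y$ is backwards. Your own derivative $\tfrac12\log\bigl(y^2/((du-y)(d-du-y))\bigr)$ is negative for small $y$ and positive near $y=du$, so $\log(\mathrm{RHS})$ is decreasing then increasing; equivalently $\widehat f_d(u,\cdot)$ is concave, starting negative and becoming positive, which is what actually gives the unique smallest root $y_{d,u}$ with $\widehat f_d<0$ on $(0,y_{d,u})$.
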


Table~\ref{etable} provides approximate values for $ \widehat A_d(u)$ for several 
values of $d$ and $u$.

Applying Theorem~\ref{t:eThm}, in Section~\ref{s:edge} we obtain lower bounds 
on the asymptotics of $i_{V,u}(d)$, as $d\to\infty$, for all $0<u\le1/2$.

\begin{cor}  \lab{c:eCor} 
Fix $u\in(0,1/2]$. As $d\to\infty$, 
\[
\widehat A_d(u) = d(1-u)-2(1-u)\sqrt{d\log(u^{-u}(1-u)^{u-1})} +o(\sqrt d).
\]
Hence
\[
i_{E,u}(d)\ge d(1-u+o(1))\quad (\mbox{as } d\to\infty).
\]
\end{cor}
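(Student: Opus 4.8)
The plan is to extract the $d\to\infty$ asymptotics directly from the implicit equation defining $y_{d,u}$ in Theorem~\ref{t:eThm}. Write $L=\log\!\left(u^{-u}(1-u)^{u-1}\right)=-u\log u-(1-u)\log(1-u)$, which is finite and strictly positive for every $u\in(0,1/2]$. Since $\widehat A_d(u)=y_{d,u}/u$, the target formula is equivalent to $y_{d,u}=du(1-u)-z$ with $z=z(d)\sim 2u(1-u)\sqrt{dL}$, so everything reduces to locating $z$. I would first take logarithms of the defining equation and substitute $y=du(1-u)-z$. Putting $a=du-y=du^2+z$, $b=d-du-y=d(1-u)^2+z$ and $c=y=du(1-u)-z$, one has the exact identities $a+b+2c=d$, $(a+c)\log u=du\log u$ and $(b+c)\log(1-u)=d(1-u)\log(1-u)$; using these to cancel the dominant $\frac d2\log d$, $du\log u$ and $d(1-u)\log(1-u)$ contributions on the two sides, the transcendental equation collapses to
\[
E(z):=\frac a2\log\!\left(1+\frac{z}{du^2}\right)+\frac b2\log\!\left(1+\frac{z}{d(1-u)^2}\right)+c\log\!\left(1-\frac{z}{du(1-u)}\right)=L.
\]

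Next I would expand the three logarithms; this is legitimate once $z=O(\sqrt d)$, which makes each ratio $O(d^{-1/2})$. The terms linear in $z$ cancel, and the quadratic terms combine — using $\frac1{u^2}+\frac1{(1-u)^2}+\frac2{u(1-u)}=\left(\frac1u+\frac1{1-u}\right)^2=\frac1{u^2(1-u)^2}$ — to give
\[
E(z)=\frac{z^2}{4du^2(1-u)^2}+O\!\left(\frac{z^3}{d^2}\right).
\]
Hence $E(z)=L$ forces $z=2u(1-u)\sqrt{dL}+o(\sqrt d)$ (in fact $+O(1)$ after a short bootstrap from the a priori bound $z=O(\sqrt d)$), and therefore $\widehat A_d(u)=y_{d,u}/u=d(1-u)-2(1-u)\sqrt{dL}+o(\sqrt d)$, which is the first claim. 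The second is then immediate: $\sqrt{dL}=o(d)$, so $\widehat A_d(u)=d(1-u)+o(d)=d(1-u+o(1))$, and the bound $i_{E,u}(d)\ge\widehat A_d(u)$ from Theorem~\ref{t:eThm} finishes it.

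The one genuinely delicate point is that Theorem~\ref{t:eThm} asks for the \emph{smallest} positive solution, so I must rule out spurious smaller roots. For this I would analyse $f(y)$, the logarithm of the right-hand side of the defining equation, on its domain $y\in(0,du)$ (here $u\le1/2$). A direct computation gives $f'(y)=\log\!\big(y/\sqrt{(du-y)(d-du-y)}\big)$, which is negative on $(0,du(1-u))$ and positive on $(du(1-u),du)$; thus $f$ decreases and then increases. Comparing with the constant value $g$ of the logarithm of the left-hand side, one has $f(0^+)-g=(\tfrac d2-1)L>0$ and $f(du(1-u))-g=-L<0$, so there is exactly one root in $(0,du(1-u))$ — this is $y_{d,u}=du(1-u)-z$ with $z$ as above, positive for large $d$ since $z=o(du(1-u))$ — and it lies to the left of every root in $(du(1-u),du)$. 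Hence it is indeed the smallest positive solution. The main obstacle is thus not any single hard step but the careful bookkeeping: verifying the exact cancellation identities, keeping the Taylor error uniform over the relevant range of $z$, and combining this with the monotonicity of $f$ to pin down the correct branch.
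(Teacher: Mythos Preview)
Your proposal is correct and is essentially the same argument as the paper's: both substitute $y=du(1-u)-z$ (the paper writes $z=cu\sqrt d$ with a fixed parameter $c$) into the equation $\widehat f_d(u,y)=0$, expand, and find that the leading behaviour is governed by a quadratic in $c$ (equivalently in $z/\sqrt d$) whose positive root is $\psi(u)=2(1-u)\sqrt{L}$. Your reduction to $E(z)=L$ via the identities $a+b+2c=d$, $a+c=du$, $b+c=d(1-u)$ is a clean repackaging of the same computation; your smallest-root discussion is handled in the paper by the concavity of $\widehat f_d(u,\cdot)$ already established in Section~\ref{s:edge}.
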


\section{Background}  \lab{s:history}

A $u$-edge ($u$-vertex) $\alpha$-expander, $\alpha>0$,
is a graph that has a $u$-edge ($u$-vertex) isoperimetric
number at least as large as $\alpha$. The $u$-isoperimetric
numbers of a graph may be viewed as indicators for its level of connectivity.
Expander graphs provide a wealth of theoretical interest and have
many applications. For a thorough exposition of the theory and applications
of expander graphs, see~\cite{HLW} and the references therein.

Regular graphs are known to be good expanders with high probability;
however, determining the isoperimetric number with precision is a
difficult task. For instance, as shown by Golovach~\cite{Gol94}, even
the problem of determining whether $i_{E}(G)\le q$,
provided $q\in\rational$ and $G$ has degree sequence bounded by 3,
is ${\bf NP}$-complete. Thus, bounds for the isoperimetric numbers
of regular graphs are of interest.

Buser~\cite{Bus84} showed that for all $n\ge4$ there exists
a cubic ($3$-regular) graph on $n$ vertices whose edge isoperimetric
number is at least $1/128$.  To quote Bollob\'as~\cite{Bol88}:
\begin{Quote}
Buser's proof $\ldots$ was very unorthodox in combinatorics and very exciting: 
it used the spectral geometry of the Laplace operator on Riemann surfaces, 
Kloosterman sums and the Jacquet\textendash{}Langlands theory. 
As Buser wrote: the proof `is rather complicated and it would be more 
satisfactory to have  an elementary proof.'
\end{Quote}
Using the standard first moment method, Bollob\'as~\cite{Bol88} provided a 
simple proof that much more is true. In~\cite{Bol88} it is shown that in fact
\[
i_{E}(d)\ge d/2-\sqrt{d\log 2} \quad(\mbox{as } d\to\infty).
\]
Several bounds for small $d$ are provided
in~\cite{Bol88}, such as $i_{E}(3)>2/11$,
$i_{E}(4)>11/25$, and $i_{E}(5)>18/25$.
Other bounds have been found by analysing the spectral gap of the
adjacency matrix of regular graphs. Let $\lambda(G)$ denote
the second largest (i.e., largest other than $d$) eigenvalue
in the adjacency matrix of $G$. Alon and Millman~\cite{AM85}
proved that if $G$ is $d$-regular, then \[
i_{E}(G)\ge(d-\lambda)/2.\]
Further, Alon~\cite{Alo97} showed if $n>40d^{9}$ and $G$ is
$d$-regular, then \[
i_{E}(G)\leq d/2-3\sqrt{d}/16\sqrt{2}.\]
The best lower bound to date for cubic graphs, $i_{E}(3)\ge1/4.95$,
was found by    Kostochka
and Melnikov~\cite{KM93} using an edge weighting argument. 
For the cases $d>3$, Lampis~\cite{L14} has obtained improvements 
on the lower bounds
for $i_E(d)$ given in~\cite{Bol88} by investigating subsets 
which are, in a certain sense, locally optimal.
Upper bounds for $i_{E}(d)$
are available via results for the bisection width of regular
graphs. Note that the (edge) bisection width of a graph $G=(V,E)$
is defined as \[
b(G)=\min_{(n-1)/2\le|U|\le n/2}|\partial_{E}(U)|/|U|,\]
and so clearly for any graph $G$, 
\[
i_{E}(G)\le2 b(G).
\]
The best upper bound for cubic graphs is that of Monien
and Preis~\cite{MP01}. Therein it is proved $ b(G)\le 1/6+\epsilon$ 
for all $\epsilon>0$ for {\em all} sufficiently large connected cubic graphs $G$. 
(The required lower bound on the size of $G$ depends on $\epsilon$.) 
Hence $i_{E}(3)\le1/3$. This bound was found by way of an algorithmic 
procedure which attempts to find a small cut.
The best upper bounds to date for $d>3$ have been found by 
D\'{i}az, Do, Serna, and Wormald~\cite{DDSW} 
and D\'{i}az, Serna, and Wormald~\cite{DSW} 
via randomized greedy algorithms analysed using the differential equation method. 
From the results of~\cite{DDSW} it follows that $i_{E}(4)\le2/3$,
and from the numerical values in~\cite{DSW} we obtain further values 
such as $i_{E}(5)\le1.0056$,
$i_{E}(6)\le1.3348$ and $i_{E}(7)\le1.7004$.

In comparison, less information about the vertex isoperimetric numbers
of regular graphs is available. Of course if $G$ is $d$-regular,
then \[
i_{V,u}(G)\le i_{E,u}(G)\le d\cdot i_{V,u}(G)\]
for all $0<u\le1/2$; but these bounds are far from sharp for most values of $u$.
Some interesting results are as follows. Tanner~\cite{Tan84} proved that if 
$G\in{\cal G}_{n,d}$ and $\lambda(G)\le\alpha d$,
then  
\[
i_{V,u}(G)\ge1/\big(u(1-\alpha^{2})+\alpha^{2}\big)-1.
\]
Friedman~\cite{Fri03} showed by a detailed examination of numbers of closed walks that 
\[
\lambda(G)\le2\sqrt{d-1}+\epsilon
\]
a.a.s.$\!$ in ${\cal G}_{n,d}$ for any $\epsilon>0$. Thus it follows
for any $d\ge3$ and $0<u\le1/2$, 
\[
i_{V,u}(d)\ge1/\big(u(1-4(d-1)/d^{2})+4(d-1)/d^{2}\big)-1.
\]
So, in particular, \bel{asymp1}
i_{V}(d)\ge1-8/d+O(1/d^{2}).\ee
Finally, one other result of interest
concerns the expansion of small sets; see~\cite[Theorem 4.16.1]{HLW}. 
For any $d\ge3$ and $\delta>0$, there exists an
$\epsilon_{\delta}>0$ for which
\bel{smallsets}i_{E,\epsilon_{\delta}}(G)\ge d-2-\delta\ee
 a.a.s.$\!$ in ${\cal G}_{n,d}$.
In fact, the same is true of vertex expansion. In what follows, 
we refer to the above as the \emph{small sets property}. In both cases, 
the expansion parameter $d-2$ is best possible. In particular, for any $k$,
\bel{isoub}
\min_{|U|=k}|\partial_V U|/|U|\le\min_{|U|=k}|\partial_E U|/|U|\le d-2+2/k.
\ee
For details see~\cite[Subsection 5.1.1]{HLW}.

\section{Model for analysis}\lab{s:models}

To analyse ${\cal G}_{n,d}$ we make use of the pairing model, 
${\cal P}_{n,d}$, described as follows. Suppose there are $n$ cells, each
containing $d$ points, where $dn$ is even. 
Let ${\cal P}_{n,d}$ denote the uniform probability space on the set 
of all perfect matchings of the $dn$ points. 
By collapsing each cell of a given $H\in{\cal P}_{n,d}$ into a single
vertex, a \emph{d}-regular multigraph $\pi_H$ on $n$ vertices is obtained.
The pairing model is due to Bollob\'as, who was the first to suggest directly 
deducing properties of random graphs from the model, though similar 
models appear in earlier works (see~\cite{Wor99} for details).

It is known that ${\bf P}(\pi_{H}\mbox{ is simple})$ is
bounded away from $0$ as $n$ tends to infinity, and that all $d$-regular 
simple graphs are selected with equal probability through the process
of choosing an $H\in{\cal P}_{n,d}$ uniformly at random and then
constructing $\pi_{H}$. Thus, to prove that a property occurs a.a.s.$\!$ in 
${\cal G}_{n,d}$, it is enough to prove that the pairings in ${\cal P}_{n,d}$ 
a.a.s.\ have the corresponding property. A survey of properties of random 
$d$-regular graphs proved using this model is in~\cite{Wor99}.

We make isoperimetric definitions for pairings to coincide with the same 
parameters for the corresponding (multi)graphs. For a pairing 
$H\in{\cal P}_{n,d}$ and a subset $U$ of its $n$
cells, let $\partial_{V(H)}(U)$ denote the
set of all cells adjacent to a cell in $U$ but not in $U$. Similarly,
let $\partial_{E(H)}(U)$ denote the set of
all edges with exactly one endpoint in a cell of $U$. Note that 
$|\partial_{V(H)}(U)|\leq|\partial_{E(H)}(U)|$.
For any $0<u\le1$, the $u$-vertex isoperimetric number for
a pairing $H\in{\cal P}_{n,d}$ is defined as 
\[
i_{V,u}(H)=\min_{|U|\leq un}|\partial_{V(H)}(U)|/|U|,\]
and likewise for any $0<u\le1/2$ the $u$-edge isoperimetric
number is defined as \[
i_{E,u}(H)=\min_{|U|\leq un}|\partial_{E(H)}(U)|/|U|.\]
Furthermore, put
\[
i_{V,u}({\cal P},d)=\sup_{}\left\{ \ell:i_{V,u}({\cal P}_{n,d})\geq \ell\mbox{ a.a.s.}\right\} 
\]
and define $i_{E,u}({\cal P},d)$ analogously.

\section{Lower bounds for vertex expansion} \lab{s:vertparam}

For a sequence $u=u(n)$ with $0< u\le1$ for all $n$, we define the 
\emph{$u$-vertex expansion number} to be
\[
I_{V,u} (d)=\sup \left\{ \ell: \min_{U\subset V,\, |U|=un}
\frac{|\partial_{V}U|}{un} \geq \ell  \mbox{ a.a.s.\ in }{\cal G}_{n,d}\right\}.
\]

The motivation to study the vertex expansion number  
is the following relation to the isoperimetric number.

\begin{lem} \lab{l:param}
Fix $0< u_0\le 1$. Then 
\[
i_{V,u_0}(d)\ge \inf _{0\le u\le u_0}\inf _{w \to u} I_{V,w}(d),
\]
where the second infimum is over sequences  $w(n)$ with $0<w\le1$. \end{lem}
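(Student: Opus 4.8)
Set $L:=\inf_{0\le u\le u_0}\inf_{w\to u}I_{V,w}(d)$. The plan is to show that a.a.s.\ $i_{V,u_0}({\cal G}_{n,d})\ge\ell$ for every real $\ell<L$; letting $\ell\uparrow L$ then gives the lemma. Call $U$ a \emph{bad set} if $1\le|U|\le u_0n$ and $|\partial_V U|<\ell|U|$; since $i_{V,u_0}(G)\ge\ell$ exactly when $G$ has no bad set, it suffices to prove that a.a.s.\ no bad set exists. A first, natural attempt is by contradiction: if this fails, then along a subsequence there is $\beta>0$ with ${\bf P}(\text{there is a bad set})\ge\beta$; choosing a bad set $U$ measurably, the conditional law of $|U|/n$ is supported on the compact interval $[0,u_0]$, and a weak-convergence extraction yields a point $u^{\ast}\in[0,u_0]$ near which, with probability bounded away from $0$, a bad set exists. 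One would then want a deterministic sequence $w(n)\to u^{\ast}$ along which bad sets of size exactly $w(n)n$ occur with probability bounded below, contradicting $\ell<L\le\inf_{w\to u^{\ast}}I_{V,w}(d)$. This stalls at the last step: $|U|/n$ lying near $u^{\ast}$ still leaves $\Theta(n)$ admissible sizes, and a union bound over them loses a factor linear in $n$, so the bare ``a.a.s.''\ content of a single $I_{V,w}(d)$ does not by itself control all $\Theta(n)$ sizes at once.

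I would instead argue with expectations and split the range of sizes. By \eqn{isoub}, applied to sets of size $k=w(n)n\to\infty$ with $k/n\to0$, we have $\inf_{w\to0}I_{V,w}(d)\le d-2$, and since $u=0$ is allowed in the outer infimum this gives $\ell<L\le d-2$. Applying the small-sets property for vertex expansion (the vertex analogue of \eqn{smallsets}, noted in Section~\ref{s:history}) with $\delta:=(d-2-\ell)/2>0$ produces $\eps:=\eps_\delta>0$ such that a.a.s.\ no $U$ with $|U|\le\eps n$ satisfies $|\partial_V U|<(d-2-\delta)|U|$; as $(d-2-\delta)|U|>\ell|U|$, a.a.s.\ no bad set has size at most $\eps n$. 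For the remaining sizes $\eps n<k\le u_0n$ I would compute expectations directly, using the same first-moment estimate that underlies the bounds on $I_{V,u}(d)$ — applied to the number $X$ of subsets of $V({\cal G}_{n,d})$ with prescribed $|U|$, $|\partial_V U|$ and $|\partial_E U|$. For each $u\in[\eps,u_0]$ this should yield a constant $c(u)>0$ together with a bound ${\bf E}\big(\#\{U:|U|=k,\ |\partial_V U|<\ell k\}\big)\le e^{-c(u)n+o(n)}$ valid for all $k$ with $k/n$ close to $u$, where $c(u)>0$ is exactly the assertion that $\ell$ lies strictly below the threshold $\inf_{w\to u}I_{V,w}(d)$, read off from the associated rate function. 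Compactness of $[\eps,u_0]$ then gives a uniform rate $c_0=c_0(\ell)>0$, so that the expected number of bad sets of size in $(\eps n,u_0n]$ is at most $n\,e^{-c_0n+o(n)}\to0$, and the first-moment method shows that a.a.s.\ there is no such bad set. Combined with the small-sets step, a.a.s.\ there is no bad set at all, which is what we wanted.

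The step I expect to be the main obstacle is making the exponential decay in the second range uniform: extracting a positive rate $c(u)$ for each $u\in[\eps,u_0]$ from the first-moment analysis, checking that the $o(n)$ error in the bound on the expected number of bad sets of size $k$ is uniform in $k$, and then invoking compactness. This is exactly where the double infimum in the statement is used — a single sequence $w\to u^{\ast}$ bounds only one set size per $n$, whereas the argument needs the decay to hold, with a rate bounded below, across the entire band $\eps n<k\le u_0n$ simultaneously. Once the first-moment calculation is arranged so as to depend continuously on the size-proportion $k/n$, this uniformity ought to be routine, and the remaining points are bookkeeping, including the minor rounding needed to keep quantities such as $w(n)n$ integral.
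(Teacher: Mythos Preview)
The paper's own proof is precisely the contradiction argument you sketch in your first paragraph and then set aside. It assumes ${\bf P}\big(i_{V,u_0}({\cal G}_{n,d})<L-\eps\big)>\eps'$ for all $n$ in an infinite set $S$, asserts without further argument that one may choose a \emph{deterministic} $w(n)$ with a limit point $u\in[0,u_0]$ so that
\[
{\bf P}\Big(\min_{|U|=w(n)n}\frac{|\partial_V U|}{|U|}<L-\eps\Big)>\eps'\qquad(n\in S),
\]
and then extends $w$ to a sequence $w'\to u$ to obtain $I_{V,w'}(d)\le L-\eps<L$. Your objection to this step is on target: from a bad set at \emph{some} size one only gets, by pigeonhole over the at most $u_0n$ sizes, a single size with probability at least $\eps'/(u_0n)$, and that does not block the a.a.s.\ in the definition of $I_{V,w'}$. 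Indeed, as a purely abstract statement the inequality can fail --- if the minimising size were uniform on $\{1,\ldots,\lfloor u_0n\rfloor\}$ then the analogue of $i_{V,u_0}(d)$ would be $0$ while every $I_{V,w}(d)$ would equal $1$ --- so some model-specific input is genuinely needed.

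Your alternative supplies that input and is sound, but note that it does not prove Lemma~\ref{l:param} as a standalone reduction: rather than using only the a.a.s.\ content of $I_{V,w}(d)$, you invoke the exponential first-moment decay from~\eqn{expected} together with the small-sets property~\eqn{smallsets}. In effect you bypass Lemma~\ref{l:param} and prove Lemma~\ref{maxf}(c) directly --- and that is exactly where the paper itself restores the needed uniformity, since the proof of Lemma~\ref{maxf} sums exponentially small expectations over all $O(n^2)$ admissible pairs $(s,y)$ and handles $u\to0^+$ via the small-sets property. So your instinct is the right one: the clean model-free reduction the lemma advertises is not really available from the definitions alone, and the rate-function-plus-compactness argument you outline is what actually carries the application.
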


\begin{proof}
Set $L$ to be right-hand side of the inequality, and assume by 
way of contradiction that, for some $\eps>0$, $i_{V,u_0}(d)$ 
is not at least $L-\eps$ a.a.s. Then  for all $n$ in some infinite set 
$S$ of positive integers, and some $\eps'>0$, we have  
${\bf P}(i_{V,u_0}({\cal G}_{n,d}) <  L-\eps) >\eps'$. Thus, there exists a 
function $w(n)>0$ with  a limit point $u\in[0,u_0]$ such that  in ${\cal G}_{n,d}$
\[
{\bf P}\left(\min_{|U|=w(n)n}\frac{|\partial_V(U)|}{| U|}<L-\eps\right)>\eps'
\]
for all $n\in S$. 
For any sequence $w'(n)\to u$ with $w'(n)=w(n)$ for all $n\in S$, 
we have that $I_{V,w'}(d)\le L-\eps$,
giving the desired contradiction.\end{proof}

Of course we expect $I_{V,u}(d)\geq i_{V,u_0}(d)$ if $u\to u_0>0$  
since it seems natural to expect that large sets will cause the most problems. 
Equality will often hold but is not always straightforward to prove, so in some 
cases we will have to be satisfied with explicit results in the form  of a fixed 
continuous function $f$ such that $I_{V,u}\ge f(c)$ when 
$u(n)\sim c$. From our argument,  we will be able to conclude that for 
fixed $u$, $i_{V,u}\ge \min\{f(c):0\le c\le u\}$.

To analyse $I_{V,u} (d)$  it will be useful to first look at the analogously
defined quantity $I_{V,u}({\cal P},d)$ for pairings.

The main complication in bounding $I_{V,u}({\cal P},d)$
via the first moment method is that both $\partial_{V}$
and $\partial_{E}$ must be considered to compute the expected number
of elements of ${\cal P}_{n,d}$ with $i_{V,u}$ equal to some specified
value. Consequently, bounding $I_{V,u}$ is more involved than $I_{E,u}$, 
as in the latter case we need only take $\partial_{E}$ into account.

For a randomly selected element of ${\cal P}_{n,d}$, let
$X_{u,s,y,d}^{(n)}$ denote the number of subsets of $V$
of size $un$ that have $|\partial_{V}|=sn$ and $|\partial_{E}|=yn$. 
Here $s$, $u$ and $y$ will be functions of $n$.
Let $C_{n,s,y}$ denote the coefficient of $x^{yn}$ in the polynomial
\bel{poly}
\left(\sum_{j=1}^{d}{d \choose j}x^{j}\right)^{sn}=\big((x+1)^{d}-1\big)^{sn},
\ee
so that $C_{n,s,y}$ is the number of ways to choose $yn$ elements of 
$sdn$ items partitioned into $sn$ groups of cardinality $d$ each, such that 
at least one item is chosen from each group.
Note also that $M(2m) =(2m)!/m!2^{m}$ is the number of perfect
matchings of $2m$ points, so, for instance, $|{\cal P}_{n,d}| = M(dn)$. 
Then in ${\cal P}_{n,d}$, 
\[
{\bf E}\big(X_{u,s,y,d}^{(n)}\big)
=C_{n,s,y}\, {n \choose un}{n-un \choose sn}{dun \choose yn}
\frac{(yn)! M(dun-yn)M(dn-dun-yn)}
{M(dn)},
\]
where the binomial coefficients choose a set $U$ of $un$ vertices, 
their $sn$ neighbours, and the $yn$ points inside them that join to points 
outside $U $, and the other factors count choices of the pairs with the 
obvious restrictions. For any $x>0$,
\[
C_{n,s,y}\le x^{-yn}\big((x+1)^{d}-1\big)^{sn}.
\]
We will use this upper bound for various $x>0$ as our estimate for $C_{n,s,y}$. 

By Stirling's approximation, for any $x>0$, 
\[
\big({\bf E}\, X_{u,s,y,d}^{(n)}\big)^{1/n}
\le\frac{(du)^{du}(d-du-y)^{(d-du-y)/2}\big((x +1)^{d}-1\big)^{s}\phi(n)}
{x ^{y}u^{u}s^{s}(1-u-s)^{(1-u-s)}(du-y)^{(du-y)/2}d^{d/2}},
\]
where $\phi(n) = n^{O(1/n)}$ contains factors which are of 
polynomial size before taking the $n$th root.
Hence for $x>0$, we have
\bel{expected}
\log {\bf E} \big(X_{u,s,y,d}^{(n)}\big)  \le n\big(f_{d}(u,s,y,x) +o(1)\big)
\end{equation}
where
\begin{align*}
f_{d}(u,s,y,x)=& du\log(du)+(d-du-y)(\log(d-du-y))/2+s\log\big((x+1)^{d}-1\big)\\
&-y\log x -u\log u-s\log s -(1-u-s)\log(1-u-s)\\
&-(du-y)(\log(du-y))/2-(d\log d)/2.
\end{align*}
One particular value we will use is $x=x_0$, defined as the 
value at which $\partial f/\partial x=0$, or equivalently
\bel{xeqn}
\frac{sd(x+1)^{d-1}}{(x+1)^{d}-1 }=\frac{y}{x}.
\ee
This choice of $x$ is important, since if $(y-s)n\to\infty$ and $(sd-y)n\to\infty$ 
as $n\to\infty$ and $x_0>0$ is the unique solution to \eqn{xeqn}, 
then (for reasons explained below)
\bel{Casy}
x_{0}^{-y }\big((x_{0}+1)^{d}-1\big)^{s }\sim C_{n,s,y}^{1/n}\quad (\mbox{as } n\to\infty).\ee
Since the relevant range of $y$ is $s\le y\le ds$ (as for any $U\subset V$, 
$|\partial_V U|\le|\partial_E U|\le d\cdot|\partial_V U|$), 
it can be observed by \eqn{Casy} that using $x=x_0$ to bound $C_{n,s,y}$ 
in our argument leads to just as good final results   as  using the precise formula. 

We briefly outline two arguments for the asymptotics at \eqn{Casy}. 
One simple option is to use the limit theorems of 
Bender~\cite[Theorems 3 and 4]{Ben73} (cf.~\cite{OR85} and~\cite{DeA02}). 
Another more transparent way to obtain \eqn{Casy} is as follows: let $Y_i$ 
be i.i.d.\ random variables with support $\{0,1,\ldots,d-1\}$ and taking values 
$i$ with probability ${d\choose i+1}/(2^d-1)$.    
Put $Y(sn)=\sum_{i=1}^{sn}Y_i$, and observe
\[
C_{n,s,y}={\bf P}(Y(sn)=yn).
\]
Log-concave sequences are unimodal. Let $y^*n$ denote an exponent 
associated with the coefficient in the $sn$th convolution  of $p(x)$ attaining 
(as close as possible to) the centre of mass. By the Berry-Esseen inequality, 
$Y(sn)$ is asymptotically normal. Hence the asymptotics of $C_{n,s,y^*}$ 
can be established. Moreover, the asymptotics of an arbitrary coefficient, 
$C_{n,s,y}$, say, may be obtained as follows. For any $r\in\real$, we have
\[
r^m[x^m]p(x)^n=[x^m]p(xr)^n,
\]
where $[x^m]g(x)$ denotes the coefficient of $x^m$ in the polynomial $g$. 
Observe that since $p(x)$ has log-concave coefficients, so does $p(xr)$,   
and it then follows by a well-known property that the same holds for $p(xr)^n$. 
Hence, selecting $r$ such that the centre of mass in $p(xr)^n$ is located at the 
coefficient of $x^m$, the asymptotics of $[x^m]p(x)^n$ can be 
found by~\cite[Lemma~1]{Ben73}.

Note that $f$ is continuous on the natural domain in question, 
with the convention $0 \log 0=0$.
Define
\begin{align*}
M_d(u,s,y) &= \min_{x\ge 0} f_{d} (u,s,y,x),\\
h_d(u,s)&= \max_{s\le y\le \min\{ds,du\}} M_d(u,s,y)
\end{align*}
and
\[
H_d(u) = \min\big\{s:  h_d(u,s)\ge0 \big\}.
\]
A little examination of $f$ shows that the various $\min$'s and $\max$'s 
exist and are continuous. 
Recalling that every relevant $x$ leads to an upper bound in~\eqn{expected}, 
we may now deduce the following.
\begin{lem}\lab{maxf}
$H_d$  has the following properties.
\begin{itemize}
\item[(a)] $H_d(u)=0$ if and only if $u\in\{0,1\}$.
\item[(b)] Fix $0< u_0<1$. If   $u=u(n)\to u_0$ as $n\to\infty$, then
\[
I_{V,u}(d)\ge \frac{H_d(u_0)}{u_0}.
\]
In the case that $u\to0^+$,  
\[
I_{V,u}(d)\ge d-2.
\]
\item[(c)]
For any $0<u_0<1$, we have 
\[
i_{V,u_0}(d)\ge\inf_{0<u\le u_0} \frac{H_d(u)}{u}.
\]
\end{itemize}
\end{lem}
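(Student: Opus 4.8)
The plan is to obtain all three parts from the first-moment estimate \eqn{expected}, working throughout in the pairing model ${\cal P}_{n,d}$ and passing to ${\cal G}_{n,d}$ only at the end, using that ${\bf P}(\pi_H\text{ is simple})$ is bounded away from $0$ and that $\pi_H$ conditioned on simplicity is uniform over ${\cal G}_{n,d}$. I would treat part (b) first, as it underlies the rest; throughout, $u(n)n$, $sn$, $yn$ are read as nearest integers, which affects neither the limiting exponents nor $u_0$.

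For part (b) in the case $u\to u_0\in(0,1)$, fix $\epsilon>0$. By definition of $H_d$ we have $h_d(u_0,s)<0$ for every $s<H_d(u_0)$, and by the continuity of $h_d$ noted above this negativity is uniform, say $h_d\le-\delta<0$, on a neighbourhood $\{|u-u_0|\le\eta,\ 0\le s\le H_d(u_0)-\epsilon\}$. For $n$ large enough that $|u(n)-u_0|\le\eta$, summing \eqn{expected} over the $O(n)$ admissible $y\in[s,\min\{ds,du\}]$ and over the $O(n)$ values $s\le H_d(u_0)-\epsilon$ shows that the expected number of $U\subseteq{\cal P}_{n,d}$ with $|U|=u(n)n$ and $|\partial_V U|\le(H_d(u_0)-\epsilon)n$ is at most $n^2 e^{n(-\delta+o(1))}\to0$, the $o(1)$ being uniform in $(s,y)$ since it comes only from Stirling's formula applied to the explicit factors of ${\bf E}(X_{u,s,y,d}^{(n)})$. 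By Markov's inequality, a.a.s.\ in ${\cal P}_{n,d}$ every such $U$ satisfies $|\partial_V U|/|U|>(H_d(u_0)-\epsilon)/u(n)$; passing to ${\cal G}_{n,d}$, letting $n\to\infty$, and then $\epsilon\to0$ gives $I_{V,u}(d)\ge H_d(u_0)/u_0$. For the case $u\to0^+$ I would instead use the small sets property, which as noted also holds for vertex expansion: for each $\delta>0$ there is $\epsilon_\delta>0$ with $i_{V,\epsilon_\delta}({\cal G}_{n,d})\ge d-2-\delta$ a.a.s., so once $u(n)n\le\epsilon_\delta n$ we get $\min_{|U|=u(n)n}|\partial_V U|/|U|\ge d-2-\delta$ a.a.s.; hence $I_{V,u}(d)\ge d-2-\delta$ for every $\delta>0$, i.e.\ $I_{V,u}(d)\ge d-2$.

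For part (a), since $s\ge0$ always, $H_d(u)=0$ is equivalent to $h_d(u,0)\ge0$. When $s=0$ the only admissible $y$ is $0$ and $f_d(u,0,0,x)$ is independent of $x$, so $h_d(u,0)=M_d(u,0,0)=f_d(u,0,0,x)$, and a short cancellation (with $0\log0=0$) gives
\[
f_d(u,0,0,x)=-\big(\tfrac d2-1\big)\big(-u\log u-(1-u)\log(1-u)\big).
\]
For $u\in\{0,1\}$ this is $0$, so $h_d(u,0)=0$ and $H_d(u)=0$. For $u\in(0,1)$ and $d\ge3$ it is strictly negative, so $h_d(u,0)<0$; meanwhile $\{s:h_d(u,s)\ge0\}$ is closed and nonempty, nonemptiness because the identity $\sum_{s,y}X_{u,s,y,d}^{(n)}=\binom{n}{un}$ forces, via \eqn{expected}, $\max_s h_d(u,s)$ to be at least the binary entropy of $u$, which is positive. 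Hence its minimum $H_d(u)$ is strictly positive. The delicate point here is to carry out the cancellation in $f_d(u,0,0,x)$ correctly — this is where $d\ge3$ enters, reflecting the connectivity of ${\cal G}_{n,d}$.

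For part (c), by Lemma~\ref{l:param} we have $i_{V,u_0}(d)\ge\inf_{0\le u\le u_0}\inf_{w\to u}I_{V,w}(d)$. Part (b) bounds the inner infimum by $H_d(u)/u$ for each $u\in(0,u_0]$ and by $d-2$ for $u=0$, so the right-hand side is at least $\min\{d-2,\ \inf_{0<u\le u_0}H_d(u)/u\}$. To conclude that this minimum equals $\inf_{0<u\le u_0}H_d(u)/u$ it suffices that $H_d(u)/u\le d-2$ for every fixed $u\in(0,1)$, and this follows from part (b) together with the matching upper bound $I_{V,u}(d)\le d-2$ supplied by \eqn{isoub} (a path on $u(n)n\to\infty$ vertices inside a connected $d$-regular graph). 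Thus $i_{V,u_0}(d)\ge\inf_{0<u\le u_0}H_d(u)/u$, as claimed. Apart from the bookkeeping in part (a), the one point that needs real care is the uniformity of the $o(1)$ across the $O(n^2)$ pairs $(s,y)$ in part (b), which is what makes the first-moment sum genuinely tend to zero rather than merely each term doing so.
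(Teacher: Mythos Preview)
Your proof is correct and follows essentially the same route as the paper's: first-moment bound plus continuity for (b) (the paper invokes monotonicity of $h_d$ in $s$ where you use compactness), the small-sets property for $u\to0^+$, the computation of $f_d(u,0,0,\cdot)$ for (a) (the paper uses its second derivative in $u$ instead of the explicit value), and Lemma~\ref{l:param} together with \eqn{isoub} for (c). Your explicit nonemptiness argument for $\{s:h_d(u,s)\ge0\}$ and your derivation of $H_d(u)/u\le d-2$ are a touch more careful than the paper's terse wording, but the argument is the same.
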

\begin{proof}
For part (a), note that $H_d(u)=0$ if and only if $u\in\{0,1\}$. 
Observe that $f_d(0,0,0,\cdot)=0=f_d(1,0,0,\cdot)$ (noting that when $s=0$, 
the only possible value $y$ in the $\max$ function in the definition of  
$h_d(u,s)$ is 0, and then $x$ does not appear in $f$). So $h_d(0,0)= h_d(1,0)\ge 0$, 
and hence $H_d(0)=H_d(1)=0$.  Conversely, suppose $H_d(u)=0$. Then 
$h_d(u,0)\ge0$, and so we have $f_d(u,0,0,\cdot)\ge0$. Thus, since 
\[
\frac{{\rm d}^2 f_d(u,0,0,\cdot)}{{\rm d}u^2}=\frac{d-2}{2u(1-u)}>0
\]
for $0<u<1$, $u$ is either 0 or 1.

For part (b), consider   $0<u_0<1$ and $u(n)\to u_0$. We have $H_d(u_0)>0$ by (a), 
so we can let $s_0$ be positive with $s_0< H_d(u_0)$.    Then $h_d(u_0,s_0)<0$.  
By  definition, $h_d(u,s)$ is nondecreasing in $s$. So, by  continuity,  
for $n$ sufficiently large, $h_d(u,s )<h_d(u_0,s_0)/2$ for all $s\le s_0$. 
Then~\eqn{expected} tells us that  in ${\cal P}_{n,d}$, the expected number 
of sets $U$ with $|U|=un$,    $|\partial_{V}U | =sn\le s_0n$, and  $|\partial_{E}U | =yn$   
is exponentially small for every relevant $y$  (noting that $y\ge s$ can be assumed 
because every boundary vertex has at least one boundary edge). Summing over all 
$O(n^2)$ relevant values of $s$ and $y$, we deduce by the union bound that a.a.s.\  
$|\partial_{V}U |\ge s_0n$ for all $U$ with $|U|=un$.  
Hence, $I_{V,u}({\cal P},d)\ge {H_d(u_0)}/{u_0}$.

Thus, the first inequality in (b) follows in view of the relation between 
${\cal G}_{n,d}$ and ${\cal P}_{n,d}$ discussed in Section~\ref{s:models}. 

In the case that $u\to0^+$, we use the small sets property discussed 
at \eqn{smallsets}: for all $\delta>0$, there is some $N_\delta\in{\mathbb N}$ so that for
$n>N_\delta$, $u(n)<\epsilon_\delta$ ($\epsilon_\delta$ as guaranteed by the property), 
and hence
\[
\min_{|U|=u(n)n}\frac{|\partial_V U|}{|U|}\ge 
\min_{|U|\le \epsilon_\delta n}\frac{|\partial_V U|}{|U|}\ge d-2-\delta
\]
a.a.s.\ in ${\cal G}_{n,d}$. The above holds for any $\delta>0$, so we have
\[
I_{V,u}(d)\ge d-2,
\]
which finishes the proof of (b).

For part (c), note that   Lemma~\ref{l:param} implies 
\[
i_{V,u_0}(d)
\ge \min\left\{\inf_{w\to 0}I_{V,w}(d),\inf_{0<u\le u_0} H_d(u)/n\right\}
\] 
and the first of these is at least $d-2$ by (b). On the other hand, by inequality \eqn{isoub}, 
$i_{V,u}(d)\le d-2$ for any $u>0$, and (c) follows. 
\end{proof}

Of course, this result is best possible for a direct application of the first 
moment method, in the sense that, from~\eqn{expected} and the earlier discussion, 
the expected number of sets of size $s$ with a boundary  which is slightly larger than 
$H_d(u_0)$ cannot be exponentially small. 

Now we need to discuss the behaviour of $f_{d} (u,s,y,x)$, which we often abbreviate to $f$.  
Similarly, since $d$ and $u$ are fixed for the whole discussion, we often refrain from explicitly 
mentioning them as parameters of other functions. 
 
A `direct attack,' solving for the minimum $x$ and maximum $y$ in the definition of $h_d(u,s)$, 
and then analysing its behaviour as a function of $s$, leads to calculations that seem too 
complicated. Instead we take an indirect approach: for each suitable $y$, we will compute a 
value of $s$ such that $y$ is the maximiser. Even with this, we do not restrict ourselves to 
using the minimising $x$, which, considering $\partial f/\partial x$, turns out to be $x_0$. 
For an upper bound, we are free to choose any $x$. To simplify the argument we will, 
for part of it, use a different choice of $x$, which happens to coincide with $x_0$ 
everywhere that matters. 

The relevant partial derivatives are 
\[
\frac{\partial f}{\partial x} 
=\frac{sd(x+1)^{d-1}}{(x+1)^d-1}-\frac{y}{x}
\]
and
\[
\frac{\partial f}{\partial y} = \log  \hat x(y)  - \log x,
\]
where
\[
\hat x(y) = \sqrt{\frac{du -y }{d-du -y}} 
\]
for all $y<du$.  

 For any $x$ and $y$, define 
 \bel{Seqn}
S(y,x)=\frac{y \big((x+1 )^{d}-1\big)}{xd (x+1)^{d-1} }    
\ee
so that  $s=S(y,x)$ satisfies~\eqn{xeqn}, and set 
\bel{shat}
\hat s(y)=S(y,\hat x(y)) 
\ee
and
\bel{Fdef}
 F( y )= F_{d,u}(y) = f_{d} (u,\hat s(y ),y ,\hat x(y ) ).
\ee

\begin{lem}\lab{l:shat}
Fix $0<u\le1/2$. We have ${\rm d}\hat s(y)/{\rm d}y >0$.
\end{lem}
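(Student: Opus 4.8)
The plan is to differentiate $\hat s(y)=S(y,\hat x(y))$ directly, after first simplifying $S$. The one computational trick needed is the telescoping identity $((x+1)^d-1)/x=\sum_{k=0}^{d-1}(x+1)^k$, which yields
\[
S(y,x)\;=\;\frac{y}{d}\cdot\frac{(x+1)^d-1}{x(x+1)^{d-1}}\;=\;\frac{y}{d}\,\psi(x),
\qquad \psi(x):=\sum_{j=0}^{d-1}(x+1)^{-j}.
\]
In this form $\psi$ is visibly positive and strictly decreasing on $(-1,\infty)$ (every summand with $j\ge1$ is strictly decreasing), so $\psi'<0$ there; consequently $\partial S/\partial y=\psi(x)/d>0$ while $\partial S/\partial x=(y/d)\,\psi'(x)\le0$ for $y\ge0$. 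Thus $\hat s(y)=(y/d)\,\psi(\hat x(y))$ is a product of the strictly increasing positive factor $y/d$ with $\psi(\hat x(y))$, and the whole game is to control the sign of the $y$-derivative of the second factor.

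Next I would pin down the sign of $\hat x'$. Differentiating $\hat x(y)=\big((du-y)/(d-du-y)\big)^{1/2}$ reduces to
\[
\frac{{\rm d}}{{\rm d}y}\,\frac{du-y}{d-du-y}\;=\;\frac{d(2u-1)}{(d-du-y)^2}\;\le\;0,
\]
the inequality being exactly where the hypothesis $u\le1/2$ is used; hence $\hat x'(y)\le0$ for $y<du$. Since $\psi$ is decreasing, $\psi(\hat x(y))$ is therefore nondecreasing (and positive), which already makes $\hat s$ strictly increasing. To obtain the sharper pointwise statement $\hat s'(y)>0$, I would just write out
\[
\hat s'(y)\;=\;\frac{\psi(\hat x(y))}{d}\;+\;\frac{y}{d}\,\psi'(\hat x(y))\,\hat x'(y),
\]
where the first term is strictly positive and the second is the product of $y\ge0$, $\psi'(\hat x(y))<0$, and $\hat x'(y)\le0$, hence $\ge0$. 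Therefore $\hat s'(y)>0$, as claimed.

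I do not anticipate a genuine obstacle here: once $S$ is rewritten as $(y/d)\,\psi(x)$ the argument is pure sign-bookkeeping, and the hypothesis $u\le1/2$ enters at exactly one point, through the monotonicity of $\hat x$. The only step deserving any care is spotting that rewrite — differentiating $S(y,\hat x(y))$ in its original quotient form is possible but leaves an expression whose positivity is far from transparent, whereas after the simplification every ingredient wears its sign on its sleeve.
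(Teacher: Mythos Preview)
Your proof is correct and follows essentially the same route as the paper: both decompose $\hat s'(y)$ via the chain rule into $\partial S/\partial y>0$ plus $(\partial S/\partial x)\cdot\hat x'(y)\ge 0$, using $u\le 1/2$ for $\hat x'\le 0$. The only cosmetic difference is in verifying $\partial S/\partial x<0$: the paper differentiates $S$ directly and invokes $1+dx-(x+1)^d<0$, whereas your telescoping rewrite $S=(y/d)\sum_{j=0}^{d-1}(x+1)^{-j}$ makes the sign immediate.
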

\begin{proof}
This follows easily by checking that $\hat x(y)$ is a 
nonincreasing function of $y$ 
and that  $\partial S(x,y)/\partial x< 0$ (using $1+dx  -(x+1)^d<0$). 
\end{proof}

We are now ready to state our main result for this section. From now on, 
we restrict attention to $u\le 1/2$, since the result in Lemma~\ref{l:shat} 
does not always extend for much larger $u$. 
We say that a real-valued function $g$ with a  real domain $D$ is 
\emph{unimodal} with \emph{mode}
$\tilde y$ if $g(y)$ is strictly increasing for $y<\tilde y$ ($y\in D$) and 
strictly decreasing for $y>\tilde y$ ($y\in D$).

\begin{prop}\lab{p:mainvertex}
Let   $d\ge3$ and $0<u\le1/2$
and let $\hat s$ and $F$ be defined as in~\eqn{shat} and~\eqn{Fdef}. Then 
\begin{itemize}
\item[(a)] $F$ is unimodal with mode $\tilde y = du(1-u)$;
\item[(b)]  $F$ has a unique zero $\bar y\in(0,\tilde y)$, and we have that
\[
H_d(u)\ge \hat s(\bar y).
\]
\end{itemize}
\end{prop}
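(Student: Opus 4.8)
The plan is to analyse the one-variable function $F(y)$ by reducing everything to a study of its derivative along the curve $s=\hat s(y)$, $x=\hat x(y)$. The key observation is that this curve is chosen precisely so that both partial-derivative conditions $\partial f/\partial x=0$ and $\partial f/\partial y=0$ are ``respected'' in the right way: $\hat x(y)$ is defined exactly so that $\log\hat x(y)-\log x=0$ at $x=\hat x(y)$, and $\hat s(y)=S(y,\hat x(y))$ is defined exactly so that $\partial f/\partial x=0$ there (by \eqn{xeqn} and \eqn{Seqn}). Consequently, differentiating $F(y)=f_d(u,\hat s(y),y,\hat x(y))$ via the chain rule, the two terms coming from $\mathrm d\hat s/\mathrm dy$ and $\mathrm d\hat x/\mathrm dy$ both vanish (the first because $\partial f/\partial s=0$ is \emph{not} true, so I should be careful here — actually only $\partial f/\partial x$ and $\partial f/\partial y$ vanish along the curve, not $\partial f/\partial s$). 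Let me restate: along the curve, $\partial f/\partial x\equiv 0$ and also, at the single point where additionally $x=\hat x(y)$, one has $\partial f/\partial y=0$. So the cleaner route is: since $\hat x(y)$ makes $\partial f/\partial x=0$ automatically, we get $\mathrm dF/\mathrm dy = \partial f/\partial y + (\partial f/\partial s)(\mathrm d\hat s/\mathrm dy)$ evaluated at $(u,\hat s(y),y,\hat x(y))$. I would compute $\partial f/\partial s = \log((x+1)^d-1) - \log s - \log(1-u-s) + \text{(const in }s)$ wait — re-reading $f_d$, the $s$-dependence is $s\log((x+1)^d-1) - s\log s - (1-u-s)\log(1-u-s)$, hence $\partial f/\partial s = \log((x+1)^d-1) - \log s + \log(1-u-s)$. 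Then $\mathrm dF/\mathrm dy$ is an explicit expression in $y$ (after substituting $\hat x,\hat s$), and I would show it is positive for $y<\tilde y=du(1-u)$ and negative for $y>\tilde y$, with the sign change at $\tilde y$ forced by the fact that $\hat x(\tilde y)=\sqrt{(du-\tilde y)/(d-du-\tilde y)}$ simplifies nicely when $\tilde y=du(1-u)$, giving $\partial f/\partial y=\log\hat x - \log\hat x = 0$ and forcing the remaining $(\partial f/\partial s)(\mathrm d\hat s/\mathrm dy)$ term to change sign. This establishes (a).

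For (b), I would first locate the value of $F$ at the endpoints of the relevant range. As $y\to 0^+$ we have $\hat s(y)\to 0$ (by $\hat s$ increasing, Lemma~\ref{l:shat}, and $S(0,\cdot)=0$), and one checks $F(y)\to$ something negative — more precisely, I expect $F(0^+)<0$ by a direct limit computation using $0\log 0=0$, since the leading behaviour of $f_d$ as $s,y\to0$ is controlled by the strictly convex function whose second derivative is $(d-2)/(2u(1-u))>0$ (the same computation that appeared in the proof of Lemma~\ref{maxf}(a)), giving a negative value for $0<u<1$. Meanwhile $F(\tilde y)=F(du(1-u))>0$: at the mode, the substitution makes $\hat x(\tilde y)$ and $\hat s(\tilde y)$ explicit, and $f_d$ collapses to a manageable expression that one verifies is strictly positive for all $d\ge 3$, $0<u\le 1/2$. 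Combined with unimodality from (a), $F$ is strictly increasing on $(0,\tilde y)$ from a negative value to a positive value, so it has exactly one zero $\bar y\in(0,\tilde y)$. (The strict decrease for $y>\tilde y$ together with $F(\tilde y)>0$ shows there is no other zero below $\tilde y$; zeros above $\tilde y$ are irrelevant to the claim.)

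Finally, for the bound $H_d(u)\ge\hat s(\bar y)$: recall $H_d(u)=\min\{s: h_d(u,s)\ge 0\}$ and $h_d(u,s)=\max_{s\le y\le\min\{ds,du\}}M_d(u,s,y)$ where $M_d(u,s,y)=\min_{x\ge0}f_d(u,s,y,x)$. Since $f_d(u,s,y,x)\ge f_d(u,s,y,\hat x(y))$ is false in general — $\hat x(y)$ is not the minimiser — I must instead argue that for $s<\hat s(\bar y)$ we have $h_d(u,s)<0$, i.e. $f_d(u,s,y,x)<0$ for all admissible $y$ and the minimising $x$; but actually for an \emph{upper} bound on $f_d$ (which is what we want, to show $h_d<0$), we are free to pick any $x$, and we pick $x=\hat x(y)$, reducing to showing $f_d(u,s,y,\hat x(y))<0$. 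For fixed $y$, $f_d(u,s,y,\hat x(y))$ as a function of $s$ is maximised (this uses $\partial f/\partial s=0$ at $s$ with $S(y,\hat x(y))=\hat s(y)$, plus concavity in $s$ which follows from $\partial^2 f/\partial s^2 = -1/s - 1/(1-u-s)<0$) at $s=\hat s(y)$, giving $f_d(u,s,y,\hat x(y))\le f_d(u,\hat s(y),y,\hat x(y))=F(y)$. Hmm, but that bound involves $\hat s(y)$, not $s$; I need to be more careful and instead argue directly: if $s<\hat s(\bar y)$ then for every admissible $y$ we have, by monotonicity of $\hat s$, that either $y<\bar y$ (so $F(y)<0$ since $F<0$ on $(0,\bar y)$) and $s$ is not too large, or $y$ could exceed $\bar y$ — this is where I must show $\hat s(y)>s$ forces the relevant inequality. \textbf{The main obstacle} is exactly this last step: converting the one-variable facts about $F$ into the two-variable statement $h_d(u,s)<0$ for $s<\hat s(\bar y)$, because it requires controlling $f_d(u,s,y,x)$ over the full admissible rectangle of $(y,x)$ rather than just along the distinguished curve, and in particular handling values of $y$ with $y>\bar y$ where $F(y)\ge 0$ but where the constraint $y\le ds$ with small $s$ should rule those $y$ out. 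I expect the resolution to be that when $s<\hat s(\bar y)$, the constraint $y\le ds$ together with $\hat s$ increasing and $\hat s(\bar y)\le$ (something) confines $y$ to a range where $F(y)<0$, but pinning down this inequality cleanly is the delicate part.
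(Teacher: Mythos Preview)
Your approach to part (a) is essentially the paper's: apply the chain rule to $F(y)=f_d(u,\hat s(y),y,\hat x(y))$ and note that along this curve both $\partial f/\partial x$ and $\partial f/\partial y$ vanish (the first by the definition of $\hat s=S(y,\hat x(y))$ via \eqn{xeqn}, the second because $\partial f/\partial y=\log\hat x(y)-\log x$ is zero at $x=\hat x(y)$), leaving $\mathrm dF/\mathrm dy=(\partial f/\partial s)\cdot(\mathrm d\hat s/\mathrm dy)$. Since $\mathrm d\hat s/\mathrm dy>0$, the sign is that of $\partial f/\partial s=\log((\hat x+1)^d-1)+\log(1-u-\hat s)-\log\hat s$. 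One small confusion: the mode $\tilde y=du(1-u)$ is \emph{not} detected by $\partial f/\partial y$ (that vanishes identically along the curve), but by $\partial f/\partial s$ vanishing there --- the paper verifies this directly using $\hat x(\tilde y)=u/(1-u)$ and $\hat s(\tilde y)=(1-u)(1-(1-u)^d)$, then observes each ingredient of $\partial f/\partial s$ is monotone in $y$ with the correct sign.

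For part (b), your endpoint analysis is right (the paper computes $F(0^+)=\tfrac{d-2}{2}(u\log u+(1-u)\log(1-u))<0$ and $F(\tilde y)=-u\log u-(1-u)\log(1-u)>0$). But there is a genuine gap in the final step $H_d(u)\ge\hat s(\bar y)$. Your first attempt --- bounding $f_d(u,s,y,\hat x(y))\le F(y)$ via concavity in $s$ and ``$\partial f/\partial s=0$ at $s=\hat s(y)$'' --- fails because $\hat s(y)$ is defined by $\partial f/\partial x=0$, not $\partial f/\partial s=0$; these are different conditions. Your fallback --- hoping that $y\le ds$ with $s<\hat s(\bar y)$ confines $y$ below $\bar y$ --- also fails: one checks $d\hat s(y)/y=((\hat x+1)^d-1)/(\hat x(\hat x+1)^{d-1})>1$ whenever $\hat x>0$, so $d\hat s(\bar y)>\bar y$ and the constraint does not exclude the dangerous range $y\in(\bar y,d\hat s(\bar y)]$ where $F(y)\ge 0$.

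The paper's resolution is a trick you did not hit on: for fixed $s_0=\hat s(y_0)$, bound $M_d(u,s_0,y)\le f_d(u,s_0,y,\hat x(y_0))$ using the \emph{constant} $x=\hat x(y_0)$, not $\hat x(y)$. Then $\partial f_d(u,s_0,y,\hat x(y_0))/\partial y=\log\hat x(y)-\log\hat x(y_0)$, and since $\hat x$ is nonincreasing this is nonnegative for $y\le y_0$ and nonpositive for $y\ge y_0$. Hence the right-hand side is maximised over \emph{all} $y$ at $y=y_0$, giving $h_d(u,\hat s(y_0))\le F(y_0)$ directly. For $s<\hat s(\bar y)$ write $s=\hat s(y)$ with $y<\bar y$ and conclude $h_d(u,s)\le F(y)<0$. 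The point is that freezing $x$ at $\hat x(y_0)$ sacrifices the exact minimiser but yields a function of $y$ whose unique maximum sits exactly at $y_0$, which is precisely what is needed.
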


\begin{proof}
To analyse $F(y)$, we define
\[
g(s,y) = f_{d} (u,s,y,\hat x (y) )
\]
and note that by definition  $F(y) = g(\hat s(y ),y)$.
We have
\[
\frac{{\rm d} F}{{\rm d} y} = \frac{\partial g}{\partial y}\mid_{s=\hat s(y)} + 
\frac{\partial g}{\partial s}\mid_{s=\hat s(y)}\frac {{\rm d}\hat s}{{\rm d} y}.
\]
Now
\[\frac{\partial g}{\partial y}  
=  \frac{\partial f_{d} (u,s,y,x )}{\partial y}  \mid_{x=\hat x(y)}   
+ \frac{\partial f_{d} (u,s,y,x )}{\partial x} \mid_{x=\hat x(y)} 
\frac{{\rm d}\hat x}{{\rm d} y},
\]
where the first partial derivative is 0 by the way we defined   
$\hat x$, and the second one, evaluated at   $s=\hat s(y)$, is  
0 by~\eqn{shat} and the comment above it. Hence, the first partial derivative 
in the above formula for  ${\rm d} F /{\rm d} y $ is 0. Furthermore, 
by Lemma~\ref{l:shat}, ${\rm d}\hat s(y)/{\rm d}y >0 $. Thus, ${\rm d} F/{\rm d} y$ 
has the same sign as 
\[
\frac{\partial g}{\partial s}\mid_{s=\hat s(y)} 
=\frac{\partial f_{d}}{\partial s}  (u,\hat s(y),y,\hat x (y) ) 
= -\log \hat s(y) + \log \big(1-u-\hat s(y)\big) +\log\big((\hat x(y) +1)^d-1\big).
\]
Put $\tilde y=du(1-u)$. Observe that
\bel{x-tilde}
\hat x(\tilde y)=\frac{u}{1-u}
\ee
and hence
\bel{s-tilde}
\hat s(\tilde y)= S(\tilde y,\hat x(\tilde y))=(1-u)(1-(1-u)^d).
\ee
Thus, after some simplifications, we find that
\[
\frac{\partial g}{\partial s}\mid_{s=\hat s(\tilde y)}=0.
\]

 The partial derivative of $-\log  s  + \log \big(1-u- s \big)$ with 
 respect to $s$ is negative, $ {\rm d}\hat s(y)/{\rm d}y >0 $, and 
 ${\rm d}\hat x(y)/{\rm d}y < 0 $  as can be verified directly.   
 So $\frac{\partial g}{\partial s}\mid_{s=\hat s(y)}$, and consequently 
 also ${\rm d}F(y)/{\rm d}y$, 
takes the value 0 at $y=\tilde y$, and it is  positive for $y<\tilde y$ 
and negative for $y>\tilde y$. This gives the unimodality claim in (a), 
 so we may turn to (b).
 
We first address the existence of $\bar y$. Performing some 
straightforward manipulations we see that
\[
\lim_{y\to0^+}F(y) = \frac{d-2}{2}(u\log u +(1-u)\log(1-u))<0.
\]
Then using the simplified expressions \eqn{x-tilde} and \eqn{s-tilde}, 
we observe that 
\[
F(\tilde y)=-u\log u-(1-u)\log(1-u)>0.
\]
Hence, there is a point $\bar y\in (0,\tilde y)$ which satisfies $F(\bar y)=0$.
By the unimodality of $F$, it is unique.

 We next  show that, essentially, when computing $H_d$, for the maximum 
 in the definition of $h_d$, we can restrict ourselves to points $(s,y)$ 
 of the form $(\hat s(y ),y)$. 
 Fix $y_0<du$. We claim that for all
$y<du$,   
\bel{maxM}
M_d(u,\hat s(y_0 ),y)\le F( y_0 )
= f_{d} (u,\hat s(y_0 ),y_0 ,\hat x(y_0 ) ).
\ee
To see this, note that 
\[
 M_d(u,\hat s(y_0 ),y)\le  f_{d} (u,\hat s(y_0 ),y  ,\hat x(y_0 ) )
\]
and from above
\[
\partial  f_{d} (u,\hat s(y_0 ),y  ,\hat x(y_0 ) )/\partial y 
= \log  \hat x(y)  - \log \hat x(y_0 ).
\]  
Since ${\rm d} \hat x(y)^2 / {\rm d}  y = d(2u-1)/(d-du-y)^2$, 
$ \hat x(y)$ is a nonincreasing function of $y$. This implies  
that for fixed $y_0$, $ f_{d} (u,\hat s(y_0 ),y  ,\hat x(y_0 ) )$ 
is maximised at $y=y_0$, which gives~\eqn{maxM}. From this, it follows  that
\bel{Fsuffices}
h_d\big(u,\hat s(y_0 )\big) 
=\max_{\hat s(y_0 )\le y\le d\hat s(y_0 )}M_d(u,\hat s(y_0 ),y)\le F(y_0).
\ee
Recalling  that $\hat s$ is a continuous increasing function of $y$ 
which tends to 0 from above with $y$, it follows that if $s<\hat s(\bar y)$, 
then $s=\hat s(  y)$ for some $y<\bar y$ and
\bel{sto0}
 h_d(u,s) = h_d\big(u,\hat s(y )\big)\le  F(y)< F(\bar y)=0,
 \ee
where the last inequality follows since ${\rm d}F/{\rm d}y > 0$ on $(0,\bar y)$.
Thus $H_d(u)\ge \hat s(\bar y)$,  as required for (b).
\end{proof}

Since $\bar y$ as in Proposition~\ref{p:mainvertex}(b) depends on $d$ and $u$, 
we denote it by $\bar y_{d,u}$. To be clear, let us emphasize that 
$F(\bar y_{d,u})=0$ means
\[
f_d(u,\hat s(\bar y_{d,u}),\bar y_{d,u},\hat x(\bar y_{d,u}))=0.
\]

Next, we define
\bel{Adu}
A_d(u) = \hat s(\bar y_{d,u})/u.
\ee

Combining the proposition with Lemma~\ref{maxf}, 
we obtain the following immediately. 

\begin{cor}\lab{C_Adu}
If $u\to u_0$ where $0<u_0\le 1/2$ is fixed, then
\[
I_{V,u}(d)\ge A_d(u_0).
\]
\end{cor}
 
\noindent By this corollary and Lemma~\ref{maxf}(c), for any $u_0\in(0,1/2]$, we have 
\bel{I-Adu}
i_{V,u_0}(d)\ge\inf_{0<u\le u_0}A_d(u).
\ee

Approximate values of $A_d(u)$ for various $d$ and $u$ are 
provided in Table~\ref{tablev}. These were found by searching for the first zero 
of $F=F_{d,u}(y)$ and finding strictly positive and negative values of $F$ on either 
side of it. 
Recall that finding such a zero of $F$ as a function of $y$ means we have found 
$\bar y_{d,u}$ and hence $A_d(u)$ via \eqn{Adu}. The entires in the table are 
monotonically decreasing in the columns, and this seems likely to hold for all $d$. 
If this is true, it would follow from \eqn{I-Adu} that  $i_{V,u}(d)\ge A_d(u)$ in general.
 
\begin{table}[!h]\footnotesize
\centering
\caption{Approximate values for $A_d(u)$. By Corollary~\ref{C_Adu}, these 
are approximate lower bounds for the $u$-vertex expansion number $I_{V,u}(d)$.}
\vspace{-0.2cm}
\begin{tabular}{c|ccccccc}
\toprule
$u$ & $\approx A_3(u)$ & $\approx A_4(u)$ & $\approx A_5(u)$ & $\approx A_{10}(u)$ & 
$\approx A_{25}(u)$ & $\approx A_{50}(u)$  & $\approx A_{100}(u)$  \\
\hline
$0.01$ &0.55822 & 1.24636 & 1.97397 & 5.71086 &16.16640 & 30.80253 & 52.21931 \\
$0.05$ & 0.43552 & 0.97129 & 1.52478 & 4.12128 &9.57894 & 14.12199 & 17.14034 \\
$0.10$ & 0.36513 & 0.80589 & 1.24807 & 3.13558 &6.15315 & 7.78467 & 8.52607 \\
$0.15$ & 0.31790 & 0.69369 & 1.06039 & 2.50085 &4.35286 & 5.11942 & 5.43785 \\
$0.20$ & 0.28136 & 0.60687 & 0.91620 & 2.04298 &3.25720 & 3.68551 & 3.86267 \\
$0.25$ & 0.25110 & 0.53536 & 0.79862 & 1.69322 &2.52784 &  2.79584 & 2.90837 \\
$0.30$ & 0.22503 & 0.47421 & 0.69923 & 1.41621 &2.01058 & 2.19121 & 2.26836 \\
$0.35$ & 0.20194 & 0.42060 & 0.61319 & 1.19112 &1.62589 &  1.75381 & 1.80926 \\
$0.40$ & 0.18108 & 0.37272 & 0.53737 & 1.00461 &1.32904 &  1.42271 & 1.46383 \\
$0.45$ & 0.16196 & 0.32936 & 0.46968 & 0.84761 &1.09323 & 1.16336 & 1.19447 \\
$0.50$ & 0.14420 & 0.28966 & 0.40859 & 0.71371 &0.90142 &  0.95467 & 0.97850 \\
\bottomrule
\end{tabular}
\label{tablev}
\end{table}

\section{Lower bounds for the vertex isoperimetric number} \lab{s:uhalf}

For $i_{V,u_0}$, as opposed to $I_{V,u_0}$ (for a sequence $u(n)\to u_0$), 
we must consider vertex sets of cardinality {\em less than or} equal to $u_0n$. 
As noted in \eqn{I-Adu}, the minimum of $A_{d}(u)$ over all $0<u\le u_0$ gives 
a lower bound for $i_{V,u_0}$. However, this turns out to be not so amenable to 
theoretical analysis. 

In this section, for the case $u_0=1/2$, we use a less direct argument to 
show that $i_{V}(d)\ge   A_{d}(1/2)$. This bound is best possible for a direct 
application of the first moment method, since it was best possible for
$I_{V,u}(d)$ over all $u\to1/2$. (See the comment after Lemma~\ref{maxf}.)

The situation is manageable at $u_0=1/2$  
largely due to the fact that in this case
\[
\hat{x}(y)=\sqrt{\frac{d/2-y}{d-d/2-y}}=1\]
for all relevant $y$, and we get the simplified expression  
\bel{new}
\hat{s}(y)=S(y,1)=2y(1-1/2^{d})/d,
\ee
which is useful for computing $F(y)$ for $u_0=1/2$, and hence $A_{d}(1/2)$. 
Corollary~\ref{C_Adu} provides us with the lower bound $I_{V,u}(d)\ge A_{d}(1/2)$ 
when $u\to1/2$. However, to get a lower bound on $i_{V}(d)$ with Lemma~\ref{maxf}, 
we need to also consider $H_d(u)/u$ for $0< u< 1/2$. For such $u$, we use 
inequality~\eqn{expected}  to show that the case $u_0=1/2$   is critical, 
in the sense that $A_{d}(1/2)\le I_{V,u}(d)$  for any sequence $u(n)\to u\in(0,1/2]$.

Let $\hat {\cal A}$ be the set of all $(u,s)$ with $s/u< A_{d}(1/2)$ and $0<u\le1/2$. 
We will show that $h_d(u,s)<0$ on $\hat {\cal A}$. It then follows that $H_d(u)>s$ 
for all $(u,s)\in \hat {\cal A}$, and then, by Lemma~\ref{maxf}, we may 
conclude $i_{V}(d)\ge A_{d}(1/2)$.

To this end, define
\bel{hhat}
\widehat h_d(u,s)= \max_{s\le y\le \min\{ds,du\}} f_{d} (u,s,y,1).
\ee
Since 
$ f_{d} (u,s,y,1)\ge M_d(u,s,y)$, we have
$  \widehat h_d(u,s) \ge  h_d(u,s) $, and it suffices to show 
$ \widehat h_d(u,s)<0$ on $\hat {\cal A}$.   
Our job will be made easier after we show that we have
$\partial f_{d} (u,s,y,1) /\partial y<0$.  

First, since we do not have a closed form for $A_{d}(1/2)$, 
we obtain an initial estimate of the location of 
$\hat {\cal A}$ with the following lemma.
Define 
\[
C(d)=(d-2 )/ (d-1).
\]

\begin{lem}\lab{l:ibound}
For any $d\geq3$, we have $0<A_{d}(1/2)< C(d)$, 
and for all $s<A_d(1/2)/2$,  $\widehat h_d(1/2,s)<0$.
\end{lem}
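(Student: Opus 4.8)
The plan is to exploit a collapse that occurs at $u=1/2$. Substituting $u=1/2$ and $x=1$ into the formula for $f_{d}$, the two terms $\tfrac12(d/2-y)\log(d/2-y)$ cancel (equivalently, $\partial f_{d}(1/2,s,y,1)/\partial y=\log\hat x(y)=\log 1=0$), so that
\[
f_{d}(1/2,s,y,1)=-\tfrac{d-1}{2}\log 2+s\log(2^{d}-1)-s\log s-\big(\tfrac12-s\big)\log\big(\tfrac12-s\big)=:\psi(s)
\]
does not depend on $y$. Hence $\widehat h_{d}(1/2,s)=\psi(s)$ for every $0<s<1/2$ (the range of $y$ in \eqn{hhat} being nonempty there), and $\psi$ extends continuously to $s=0$ with $\psi(0)=-\tfrac{d-2}{2}\log 2<0$. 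The whole lemma thereby reduces to a one-variable analysis of $\psi$ on $[0,1/2)$.

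First I would record that $\psi'(s)=\log\big((2^{d}-1)(\tfrac12-s)/s\big)$, which is positive for $s<s^*$ and negative for $s>s^*$, where $s^*:=\tfrac12-2^{-(d+1)}$; in particular $\psi$ is strictly increasing on $(0,s^*)$. Next I would locate $A_{d}(1/2)/2$ inside that interval. Writing $\bar y=\bar y_{d,1/2}$, by definition $A_{d}(1/2)=2\hat s(\bar y)$ and $F(\bar y)=f_{d}(1/2,\hat s(\bar y),\bar y,1)=\psi(\hat s(\bar y))=0$. By Proposition~\ref{p:mainvertex}, $\bar y<\tilde y=d/4$, while $\hat s$ is increasing (Lemma~\ref{l:shat}) with $\hat s(d/4)=\tfrac12(1-2^{-d})=s^*$ by \eqn{new} (cf.\ \eqn{s-tilde}); hence $\hat s(\bar y)\in(0,s^*)$ and $\psi(\hat s(\bar y))=0$. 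Consequently $A_{d}(1/2)=2\hat s(\bar y)>0$, and strict monotonicity of $\psi$ on $(0,s^*)$ gives $\widehat h_{d}(1/2,s)=\psi(s)<\psi(\hat s(\bar y))=0$ for all $0\le s<\hat s(\bar y)=A_{d}(1/2)/2$, which is the second assertion of the lemma.

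It remains to prove $A_{d}(1/2)<C(d)$, i.e.\ $\hat s(\bar y)<\sigma$ with $\sigma:=C(d)/2=\tfrac{d-2}{2(d-1)}$. One checks $0<\sigma<s^*$, the right-hand inequality being equivalent to $d-1<2^{d}$. Since $\psi$ is strictly increasing on $(0,s^*)$ and $\psi(\hat s(\bar y))=0$, it suffices to show $\psi(\sigma)>0$. For this, use $\log(2^{d}-1)>(d-1)\log 2$ and $\sigma(d-1)=\tfrac{d-2}{2}$, so the $\log 2$ contributions in $\psi(\sigma)$ collapse to $-\tfrac12\log 2$; then, writing $\sigma=\tfrac12\cdot\tfrac{d-2}{d-1}$ and $\tfrac12-\sigma=\tfrac12\cdot\tfrac1{d-1}$, the surviving terms $-\sigma\log\sigma-\big(\tfrac12-\sigma\big)\log\big(\tfrac12-\sigma\big)$ reorganize as $\tfrac12 H\big(\tfrac1{d-1}\big)+\tfrac12\log 2$, where $H(p)=-p\log p-(1-p)\log(1-p)$. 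Thus $\psi(\sigma)>\tfrac12 H\big(\tfrac1{d-1}\big)>0$ for $d\ge 3$, whence $\hat s(\bar y)<\sigma$, i.e.\ $A_{d}(1/2)<C(d)$.

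The only genuinely quantitative step is the inequality $\psi(\sigma)>0$, and the device that keeps it painless is replacing $\log(2^{d}-1)$ by the crude bound $(d-1)\log 2$ and then recognizing the binary-entropy structure of what is left; everything else is monotonicity bookkeeping, so I do not anticipate a serious obstacle.
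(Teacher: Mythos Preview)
Your proof is correct and is essentially the paper's argument reparametrised from $y$ to $s$: since $\hat s$ is an increasing bijection and $F(y)=\psi(\hat s(y))$, the paper's test point $y_{d}=d(d-2)2^{d-2}/\big((d-1)(2^{d}-1)\big)$ satisfies $\hat s(y_{d})=\sigma=C(d)/2$, so your key inequality $\psi(\sigma)>0$ is literally the same statement as the paper's $F_{d,1/2}(y_{d})>0$, and your monotonicity argument for the second claim is the $s$-side version of the paper's appeal to~\eqn{sto0}. The only genuine variation is cosmetic: you dispatch $\psi(\sigma)>0$ via the binary-entropy identity, whereas the paper does an equivalent direct algebraic manipulation.
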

    
\begin{proof}
Fix $d$. Put $y_{d}=d (d-2 )2^{d-2}/ (d-1 ) (2^{d}-1 )$. 
(We follow the convention $a/bc = a/(bc)$, that is, 
multiplication by juxtaposition takes precedence over `$/$'.)
We have 
\begin{align*}
F_{d,1/2}(y_{d})&=-\big((d^{2}-3d+2)\log 2 +\log(1/(d-1))\big)/2(d-1)\\
&\quad +(d-2)\log\big((d-1)(2^{d}-1)/(d-2)\big)/2(d-1)\\
&>-\big((d^{2}-3d+2)\log 2 -(d-2)\log(2^{d}-1)\big)/2(d-1)\\
&=(d-2)(\log(2^{d}-1)-(d-1)\log 2 )/2(d-1)\\
&=(d-2)\log ((2^{d } -1)/2^{d-1})/2(d-1)\\
&>0.
\end{align*}

Hence $A_d(1/2)=2\hat s(\bar y_{d,1/2})$,
where $0<\bar y_{d,1/2}<y_d$. Since $\hat s$ is 
nonnegative and monotonically increasing, we have by \eqn{new} that
\[
0<A_d(1/2)<2\hat s(y_d)=4y_d(1-1/2^d)/d = (d-2)/(d-1)=C(d).
\]
This establishes the first inequalities in the lemma.
 
Since $\hat x(y)\equiv1$ at $u=1/2$ and so $\widehat h_d(1/2,s)=h_d(1/2,s)$, 
we have  $\widehat h_d(1/2,s)<0$ for all $s<A_d(1/2)/2$  by (\ref{sto0}), 
where, in this instance, $\bar y =\bar y_{d,1/2}$.
\end{proof}

We are ready to prove the main result for this section.

\begin{proof}[Proof of Theorem~\ref{t:mainvertexhalf}]
Fix $d\ge 3$. It will be convenient to parameterize $s$ in terms of $u$ 
and a new variable $r$. Set $s=ru$ and note that if $r=0$, then necessarily $y=0$ 
(since $\partial_V=0$ if and only if $\partial_E=0$). Recall that, by the discussion 
just before Lemma~\ref{l:ibound}, it is enough to show that 
$ \widehat h_d(u,s)<0$ on $\hat {\cal A}$. Observe that for relevant $s$ and $u$, 
\[
\frac{{\rm d}f_{d}(u,s,y,1)}{{\rm d}y}=\log\sqrt{\frac{du-y}{d-du-y}}<0.
\]
Hence $\widehat h_d(u,ru)= f_d(u,ru,ru,1)$. Moreover, the final inequality in 
Lemma~\ref{l:ibound} takes care of the case $u=1/2$. Thus, it suffices to show that  
\[
g_{d}(u,r):=f_d(u,ru,ru,1)<0
\] 
on ${\cal A}$, where
\[
{\cal A}=\left\{ (u,r) : 0<u<1/2, 0\le r< A_d(1/2)\right\}.
\]
For all $0\leq r<C(d)$,  it is easy to check that
\bel{gat0}
\lim_{u\to0^{+}}g_{d}(u,r)=0.
\ee
Again, by the final inequality in Lemma~\ref{l:ibound}, $g_d(1/2,r)<0$ 
for all $0<r<A_d(1/2)$. Hence it suffices to show that for each $0\le r<C(d)$, 
$g_{d}(u,r)$ is either strictly convex or strictly increasing in $u$ at every $0<u<1/2$.  
We partition the region $\cal A$  as follows, with a view to showing that 
$g_{d}(u,r)$ is either increasing or convex in each region. Define 
\begin{align*}
{\cal A}_{0}&=\left\{ (u,r): r=0,\ 0<u<1/2 \right\},\\
{\cal A}_{1}&=\left\{ (u,r): 0<r\le \min\{c(d),A_d(1/2)\},\ 0<u<1/2\right\},\\
{\cal A}_{2}&=\left\{ (u,r): c(d)<r<A_d(1/2),\ 0<u<U(r,d)\right\}, \\
{\cal A}_{3}&=\left\{ (u,r): c(d)<r<A_d(1/2),\ U(r,d)\le u<1/2\right\},
\end{align*}
with
\[
U(r,d)=\min\bigg\{1/2,\frac{d(d-2-r)}{(r+1)d^{2}+(d-2)r^{2}-2r-d(r+2)}\bigg\} 
\]
and $c(d)=(d-2)/(d+2)$.

We analyse $g_{d}$ in the domain corresponding to each portion of ${\cal A}$, 
starting with ${\cal A}_{0}$. Since 
\[
\frac{{\rm d}^{2}g_{d}(u,0)}{{\rm d}u^{2}}=(d-2)/2u(1-u)>0,
\]
$g_{d}(u,r)$ is strictly convex in $u$ for $0<u<1/2$, as required for ${\cal A}_{0}$.

Assume hereafter $r>0$. Note that
\[
\frac{{\rm d}^{2}g_{d}(u,r)}{{\rm d}u^{2}}=\frac{\zeta(r,u,d)}{\eta(r,u,d)},
\]
with
\begin{align*}
\zeta(r,u,d)&=(1-u-ru)d^{2}+(-2+ru-r^{2}u+2u-r)d+2ru(r+1),\\
\eta(r,u,d)&=2u( 1-u-ru)( d-du-ru).
\end{align*}
As $u\leq1/2$ and $r\le A_d(1/2)<C(d)<1$, we have $\eta(r,u,d)>0$.
Further, \[
\frac{{\rm d}\zeta(r,u,d)}{{\rm d}u}=-(1+r)(d-2)(d+r)<0.\]
Hence to show ${\rm d}^{2}g_{d}(u,r)/{\rm d}u^{2}>0$, 
it is enough to determine that
\[
\zeta(r,1/2,d)=(1-d/2)r^{2}+(1-d/2-d^{2}/2)r+d(d/2-1)>0.
\]
The coefficient of $r$ and $r^{2}$ above are both negative since
$d\ge3$. Therefore we have that $\zeta(r,1/2,d)>0$ in ${\cal A}_{1}$
since \[
\zeta(c(d),1/2,d)=4d(d-2)/(d+2)^{2}>0
\]
for $d\ge3$. So $g_{d}(u,r)$ is strictly convex in $u$ for fixed 
$r$ such that $(u,r)\in{\cal A}_{1}$, as required. 
 
As seen above,  $\zeta(r,u,d)$ is decreasing in $u$. 
Observe that whenever $U(r,d)\le1/2$,
$\zeta(r,U(r,d),d)=0$. Thus for any $r\in(c(d),C(d))$,
$g_{d}(u,r)$ is strictly convex in $u$ for $0<u<U(r,d)$.
That is, $g_{d}(u,r)$ is strictly convex in $u$ for fixed $r$ 
such that $(u,r)\in{\cal A}_{2}$. On the other hand, if 
$r\in(c(d),C(d))$ and $U(r,d)\le u<1/2$, then
the situation is as follows. By the definition of $U(r,d)$ and the 
fact that $\zeta$ is linear in $u$, we deduce that ${\rm d}g_{d}(u,r)/{\rm d}u$ 
is decreasing in $u$. Hence, to show that $g_{d}(u,r)$ is increasing in $u$, 
it suffices to show that
\begin{align*}
\frac{{\rm d}g_{d}(u,r)}{{\rm d}u}\big|_{u=1/2}
&=r\log((1-r)/r)+r\log(2^d-1) +d\log(d/(d-r)) +\log(1-r)\\
&=\log\left({\frac{(1-r)^{1+r}(2^d-1)^rd^d}{r^r(d-r)^d}}\right)\\
&>0
\end{align*}
for $r\in[c(d),C(d)]$.
To see that the above holds for $d\ge8$, observe that for this range of $d$ we have
\[
r(1-r)^{-\frac{1+r}{r}}
<C(d)(1-C(d))^{-\frac{1+C(d)}{c(d)}}
<(2^d-1)\left(\frac{d}{d-c(d)}\right)^{\frac{d}{C(d)}}
<(2^d-1)\left(\frac{d}{d-r}\right)^{\frac{d}{r}}.
\]
For $d\le7$, put $\theta_d(r )={\rm d}g_d(u,r)/{\rm d}u\mid_{u=1/2}$. We have
\[
\frac{{\rm d}^{2}\theta_d(r )}{{\rm d}r^{2}}=-\frac 2{1-r}-\frac{1+r}{(1-r)^2} -\frac1{r}+\frac{d}{(d-r)^2}<0
\]
for $d\ge 3$  and $r <1$, since each of the first three terms is less than 
$-1$ and the last is less than 1. By direct calculation,  all the endpoints   
$ \theta_d(c(d))$ and  $\theta_d(C(d))$ are positive for $3\le d\le 7$. 
So by the concavity of $\theta_d(r)$ in $r$, we have that $\theta_d(r)>0$ for all 
$d\le7$ and relevant $r$. Therefore $g_{d}(u,r)$ is strictly increasing in $u$ for 
fixed $r$ such that $(u,r)\in{\cal A}_{3}$. Putting the above together, we 
conclude that $g_{d}<0$ on ${\cal A}_{2}\cup{\cal A}_{3}$. 

Altogether we have shown $g_{d}<0$ on ${\cal A}=\bigcup_{i=0}^{3}{\cal A}_{i}$. 
As noted earlier, this implies $i_{V}(d)\ge  A_d(1/2)$.
\end{proof}

To supplement the values in Table~\ref{tablev}, additional
approximations to 
$A_d(1/2)$ for various $d$ were generated by the same method, 
as shown in Table~\ref{tablevhalf}.

\begin{table}[!h]\footnotesize
\centering
\caption{Approximate values for $A_d=A_d(1/2)$. By Theorem~\ref{t:mainvertexhalf}, 
these are approximate lower bounds for the vertex isoperimetric number $i_V(d)$.}
\vspace{-0.2cm}
\begin{tabular}{cc|cc|cc|cc|cc}
\toprule
$d$ & $\approx A_{d}$ & $d$ & $\approx A_{d}$ & $d$ & $\approx A_{d}$ & $d$ 
& $\approx A_{d}$ & $d$ & $\approx A_{d}$ \\
\hline
3 &0.14420 &11 &0.74355 &19 &0.86463 &27 &0.90972 &35 &0.93269\\
4 &0.28966 &12 &0.76827 &20 &0.87246 &28 &0.91338 &40 &0.94201\\
5 &0.40859 &13 &0.78897 &21 &0.87948 &29 &0.91677 &50 &0.95467\\
6 &0.50190 &14 &0.80652 &22 &0.88579 &30 &0.91991 &60 &0.96284\\
7 &0.57466 &15 &0.82155 &23 &0.89149 &31 &0.92283 &70 &0.96854\\
8 &0.63178 &16 &0.83455 &24 &0.89668 &32 &0.92555 &80 &0.97274\\
9 &0.67716 &17 &0.84587 &25 &0.90141 &33 &0.92809 &90 &0.97596\\
10 &0.71371 &18	&0.85582 &26 &0.90574 &34 &0.93046 &100 &0.97850\\
\bottomrule
\end{tabular}
\label{tablevhalf}
\end{table}

\section{Asymptotic bounds for $i_{V}(d)$}\lab{s:asymp}

The asymptotics of $A_{d}(1/2)$, as $d\to\infty$, can be computed as
follows. For the case $u=1/2$, we have, as stated 
above at \eqn{new}, $\hat x(y)=1$ and
\[
\hat s(y) = 2y(1-1/2^d)/d.
\]
Hence, as discussed after the proof of 
Proposition~\ref{p:mainvertex},
\[
A_d(1/2)=4\bar y(1-1/2^d)/d,
\]
where $\bar y$ uniquely satisfies
\[
f_d(1/2,2\bar y(1-1/2^d)/d,\bar y, 1)=0.
\]
However, observe that
\[
f_{d}(1/2,s,y,1)=
s\log(2^d-1)+(\log2)/2-s\log s-(1/2-s)\log(1/2-s)-(d\log 2)/2.
\]
Hence, when $u=1/2$ and $x=1$, $y$ does not appear in $f_d$. 
Thus, we investigate the asymptotics of $s_d=2\bar y(1-1/2^d)/d$ 
satisfying
\[
f_d(1/2,s_d,\cdot,1)=0.
\]
Moreover, note that by Lemma~\ref{l:ibound}, $A_d(1/2)>0$ for all $d\ge3$. 
Hence, for all $d\ge 3$, $s_d>0$. In fact, we can show that $s_d\to1/2$. 
Writing $\log (2^d-1)=d\log 2 +\log (1-1/2^d)$ and manipulating,   we obtain
\begin{align*}
f_{d}(1/2,s,\cdot,1) 
=& s\big(-\log s-\log 2+\log(1-2s)+d\log 2+\log(1-1/2^d)\big)-(d\log 2)/2\\
  &+\log 2-\log(1-2s)/2\\
=&(s-1/2)d\log 2 -s\log s +(s-1/2)\log (1-2s)+(1-s)\log 2\\ 
  & +O(s/2^d)\\
=&(s-1/2)d\log 2 +O(1).\end{align*}
Setting this equal to 0, we conclude $s_d= 1/2 +O(1/d)$ as $d\to\infty$. 
With this in mind, we make the change of variables $t=1/2-s_d$ in the 
above expression, obtaining 
\[
0=-td\log2-(1/2) \log(1/2)+O(1/d)+O(\log d/d)+(1/2)\log 2
\]
and hence
\[
t=1/d+O(\log d/d^{2}).
\] 
Therefore 
\bel{asymp2}
A_{d}(1/2)=1-2/d+O(\log d/d^{2}).\ee
 By Theorem~\ref{t:mainvertexhalf} and (\ref{asymp2}), we deduce 
 Corollary~\ref{t:1/2asymp}.
Observe that Corollary~\ref{t:1/2asymp} provides a stronger bound 
on the asymptotics of $i_{V}(d)$ as $d\to\infty$ than~\eqn{asymp1}.

\section{A note on the edge isoperimetric number}\lab{s:edge}

 In this section we show how the above arguments can be modified to 
 obtain a.a.s.\ lower bounds for the edge isoperimetric number of 
 random regular graphs. As discussed in Section~\ref{s:intro}, 
 Bollob\'as~\cite{Bol88} computed lower bounds on $i_{E}(d)$ for all $d\ge3$ 
 with the first moment method. Also, the asymptotics of $i_{E}(d)$, as $d\to\infty$, 
 are investigated. Moreover, it is claimed that the arguments could be 
 modified for $0<u<1/2$.  
However, explicit lower bounds on $i_{E,u}(d)$ are not given, nor are 
they given for the asymptotics of $i_{E,u}(d)$, 
as $d\to\infty$, for the cases $0<u<1/2$. 
In this section we provide lower bounds on $i_{E,u}(d)$ which result 
from direct application of the first moment method 
for all $d\ge3$ and $0<u\le1/2$. The bounds are analysed 
asymptotically as $d\to\infty$ for fixed $u$.

For a randomly selected element of ${\cal P}_{n,d}$, let $X_{u,y,d}^{(n)}$
denote the number of subsets of $V$ of size $un$ that have $|\partial_{E}|=yn$.
Then 
\[
{\bf E}\left(X_{u,y,d}^{(n)}\right)
={n \choose un}{dun \choose yn}{dn-dun \choose yn}
\frac{(yn)!M(dun-yn)M(dn-dun-yn)}{M(dn)},
\]
where $M(2m)=(2m)!/m!2^{m}$ counts the number
of perfect matchings of $2m$ points, the binomial coefficients choose
a set $U$ consisting of $un$ vertices and $yn$ boundary edges,
and the other factors count choices of the pairs with the obvious
restrictions. Therefore, via Stirling's approximation,
\[
\left({\bf E}\, X_{u,y,d}^{(n)}\right)^{1/n}
=\frac{(du)^{du}(d-du)^{d-du}\phi(n)}
{u^{u}(1-u)^{1-u}y^{y}(du-y)^{(du-y)/2}(d-du-y)^{(d-du-y)/2}d^{d/2}},
\]
 where $\phi(n)=n^{O(1/n)}$ contains the factors
of polynomial size before taking the $n$th root. Hence 
\bel{edgeexp}
\log{\bf E}\left(X_{u,y,d}^{(n)}\right)\le n(\widehat f_d(u,y)+o(1)),
\ee
where
\begin{align*}
\widehat f_d(u,y)=&du\log(du)+(d-du)\log(d-du)-u\log{u}-(1-u)\log(1-u)-y\log{y}\\
&-\big((du-y)\log(du-y)+(d-du-y)\log(d-du-y)+d\log{d}\big)/2.
\end{align*}
Up to this point, these facts are essentially contained in~\cite{Bol88}.
To get lower bounds on $i_{E,u}(d)$ we find where $\widehat f_d<0$ 
and use \eqn{edgeexp}. The argument is comparable to that of the 
preceding sections; however, the situation is much simpler since in the 
current case we analyse a function with only two parameters. 
(Recall that in Section~\ref{s:vertparam}, there was a function $f_d$, 
defined at \eqn{expected}, with parameters corresponding to the edge 
and vertex boundary sizes and also one used to estimate a 
polynomial coefficient.)

Let us collect some facts about  $\widehat f_d$.
Note that if $G\in{\cal G}_{n,d}$ and $|U|=un$ with 
$u\le 1/2$, then $|\partial_E U|\le dun$.

Fix $d\ge3$ and $0<u\le1/2$. Note that $\widehat f_d(u,y)$ is 
strictly concave in $y$ for $0\le y<du$. Indeed, so long as 
$0\le y< du$, we have
\[
\frac{{\rm d}^{2}\widehat f_d(u,y)}{{\rm d}y^{2}}
=\frac{d(y-2du(1-u))}{2y(du-y)(d-du-y)}<0.
\]
We define 
\[
\widehat  A_d(u)=\frac{1}{u}\min\{y:\widehat f_d(u,y)\ge0\}.
\]
To see that $0< \widehat A_d(u)\le d(1-u)$, observe that
\[
\lim_{y\to0^{+}}\widehat f_d(u,y)=\frac{d-2}{2}(u\log u+(1-u)\log(1-u))<0
\]
and that, after some simplifications, 
\[
\widehat f_d(u,du(1-u))=-u\log u-(1-u)\log(1-u)>0.
\]

As we did for the case of vertex expansion, we will define a pointwise 
measure of edge expansion. For a sequence $u=u(n)$ with 
$0<u\le1/2$ for all $n$, we define the \emph{$u$-edge expansion number}  
to be 
\[
I_{E,u} (d)
=\sup \left\{ \ell: \min_{U\subset V,\, |U|=un}\frac{|\partial_{E}U|}{un} 
\geq \ell  \mbox{ a.a.s.\ in }{\cal G}_{n,d}\right\}.
\]

We state here analogues of Lemmas~\ref{l:param} and~\ref{maxf} 
for the case of edge expansion. 
We do not provide the proofs since they are very similar.

\begin{lem}\lab{l:eparam}
Fix $0<u_0\le1/2$. Then
\[
i_{E,u_0}(d)\ge  \inf_{0\le u\le u_0}\inf_{w\to u}I_{E,w}(d),
\]
where the second infimum is over sequences $w(n)$ with $0<w\le1/2$.
\end{lem}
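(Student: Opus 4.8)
The plan is to mirror exactly the argument given for the vertex case in Lemma~\ref{l:param}, replacing $\partial_V$ by $\partial_E$ and $I_{V,w}$ by $I_{E,w}$ throughout. Set $L$ to be the right-hand side of the claimed inequality, namely $L=\inf_{0\le u\le u_0}\inf_{w\to u}I_{E,w}(d)$, and argue by contradiction: assume that for some $\eps>0$, the statement $i_{E,u_0}({\cal G}_{n,d})\ge L-\eps$ does not hold a.a.s.

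First I would extract a witnessing sequence. By the negation of ``a.a.s.'', there is an infinite set $S$ of positive integers and some $\eps'>0$ with ${\bf P}\big(i_{E,u_0}({\cal G}_{n,d})<L-\eps\big)>\eps'$ for all $n\in S$. By definition of $i_{E,u_0}$ as a minimum over $|U|\le u_0 n$, on this event there is a set $U$ with $|U|\le u_0n$ and $|\partial_E(U)|/|U|<L-\eps$; discretizing, for each $n\in S$ there is some integer $k(n)\le u_0n$ such that ${\bf P}\big(\min_{|U|=k(n)}|\partial_E(U)|/|U|<L-\eps\big)>\eps'/n$ — and then, since the number of admissible values $k(n)$ is $O(n)$, by averaging one may in fact fix $w(n)=k(n)/n$ so that this probability is bounded below by a fixed positive constant along an infinite subset of $S$ (relabelled $S$). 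Passing to a further subsequence, $w(n)\to u$ for some limit point $u\in[0,u_0]$.

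Next I would complete the sequence off $S$: choose any $w'(n)$ with $0<w'\le 1/2$, $w'(n)=w(n)$ for $n\in S$, and $w'(n)\to u$. Then along $S$ the event $\min_{|U|=w'(n)n}|\partial_E(U)|/(w'(n)n)<L-\eps$ has probability bounded away from $0$, so it does not hold a.a.s., whence $I_{E,w'}(d)\le L-\eps$. But $w'\to u\le u_0$, so $I_{E,w'}(d)\ge \inf_{0\le u'\le u_0}\inf_{w\to u'}I_{E,w}(d)=L$, a contradiction. This forces $i_{E,u_0}(d)\ge L-\eps$ for every $\eps>0$, i.e.\ $i_{E,u_0}(d)\ge L$, as desired.

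The only mild obstacle, exactly as in the vertex case, is the bookkeeping in the first step: turning a lower bound of order $\eps'$ on the probability that \emph{some} set of size $\le u_0n$ is badly expanding into a lower bound on the probability that a set of one \emph{fixed} cardinality $w(n)n$ is badly expanding. This is handled by a union/pigeonhole argument over the $O(n)$ possible cardinalities together with passage to a subsequence, and costs nothing since we only need a positive (not necessarily $\eps'$) lower bound infinitely often. Everything else is verbatim the proof of Lemma~\ref{l:param}, which is why the paper omits it.
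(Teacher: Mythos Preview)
Your proposal is correct and follows exactly the approach the paper intends: the paper's entire proof is the single sentence ``The proof is analogous to that of Lemma~\ref{l:param},'' and you have carried out precisely that translation, replacing $\partial_V$ and $I_{V,w}$ by $\partial_E$ and $I_{E,w}$. You even spell out the pigeonhole/subsequence bookkeeping that the paper's proof of Lemma~\ref{l:param} leaves implicit, so your write-up is, if anything, slightly more detailed than what the paper provides.
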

\begin{proof}
The proof is analogous to that of Lemma~\ref{l:param}.
\end{proof}

\begin{lem}\lab{l:emaxf}
$\widehat A_d$ has the following properties.
\begin{itemize}
\item[(a)] Fix $0< u_0\le 1/2$. If   $u=u(n)\to u_0$ as $n\to\infty$, then 
\[
I_{E,u}(d)\ge \widehat  A_d(u).
\]
In the case that $u\to0^+$,  
\[
I_{E,u}(d)\ge d-2.
\]
\item[(b)]
For any $0<u_0\le1/2$, we have 
\[
i_{E,u_0}(d)\ge\inf_{0<u\le u_0} \widehat  A_d(u).
\]
\end{itemize}
\end{lem}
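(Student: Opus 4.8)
The plan is to prove Lemma~\ref{l:emaxf} by mirroring the argument for Lemma~\ref{maxf}, but with the considerable simplification that here there is no polynomial-coefficient term and no separate vertex-boundary parameter: the relevant function $\widehat f_d(u,y)$ has only the two arguments $u$ and $y$, and is strictly concave in $y$ on $[0,du)$. So the structure is: (i) establish the first-moment bound giving $I_{E,u}({\cal P},d)\ge \widehat A_d(u)$ in the pairing model for $u\to u_0$, (ii) transfer to ${\cal G}_{n,d}$, (iii) handle the $u\to0^+$ case via the small sets property, and (iv) combine with Lemma~\ref{l:eparam} and the upper bound \eqn{isoub} to get part (b).

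For part (a) with $0<u_0\le1/2$ fixed and $u(n)\to u_0$, I would argue as follows. By definition of $\widehat A_d(u_0)$ we have $\widehat f_d(u_0,y)<0$ for all $y<u_0\widehat A_d(u_0)=\min\{y:\widehat f_d(u_0,y)\ge0\}$; pick any $y_0<u_0\widehat A_d(u_0)$, so $\widehat f_d(u_0,y_0)<0$. By continuity of $\widehat f_d$, for all sufficiently large $n$ we have $\widehat f_d(u(n),y)<\widehat f_d(u_0,y_0)/2<0$ for every $y\le y_0$ (here one uses that $\widehat f_d(u,y)$ is, for fixed $u$, nondecreasing in $y$ up to its maximiser — which follows from strict concavity in $y$ together with $\widehat f_d(u,0^+)<0$ — so the relevant supremum over $y\le y_0$ of $\widehat f_d(u,\cdot)$ is controlled). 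Then \eqn{edgeexp} shows that in ${\cal P}_{n,d}$ the expected number of sets $U$ with $|U|=un$ and $|\partial_E U|=yn\le y_0 n$ is exponentially small for each such $y$; summing over the $O(n)$ relevant values of $y$ and applying the union bound, a.a.s.\ every $U$ with $|U|=un$ has $|\partial_E U|\ge y_0 n$, i.e.\ $|\partial_E U|/(un)\ge y_0/u\to y_0/u_0$. Letting $y_0\uparrow u_0\widehat A_d(u_0)$ gives $I_{E,u}({\cal P},d)\ge\widehat A_d(u_0)$, and the relation between ${\cal P}_{n,d}$ and ${\cal G}_{n,d}$ from Section~\ref{s:models} (a property holding a.a.s.\ in the pairing model holds a.a.s.\ in ${\cal G}_{n,d}$) yields $I_{E,u}(d)\ge\widehat A_d(u_0)$. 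For the case $u\to0^+$, I would use the small sets property at \eqn{smallsets} exactly as in the proof of Lemma~\ref{maxf}(b): for each $\delta>0$ there is $N_\delta$ with $u(n)<\epsilon_\delta$ for $n>N_\delta$, whence $\min_{|U|=u(n)n}|\partial_E U|/|U|\ge\min_{|U|\le\epsilon_\delta n}|\partial_E U|/|U|\ge d-2-\delta$ a.a.s.; since $\delta$ is arbitrary, $I_{E,u}(d)\ge d-2$.

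For part (b), Lemma~\ref{l:eparam} gives
\[
i_{E,u_0}(d)\ge\min\Big\{\inf_{w\to0}I_{E,w}(d),\ \inf_{0<u\le u_0}\widehat A_d(u)\Big\},
\]
where I have used part (a) to identify $\inf_{w\to u}I_{E,w}(d)$ with $\widehat A_d(u)$ for each fixed $u\in(0,u_0]$ and with the value $d-2$ for $u=0$. The first term is $\ge d-2$ by part (a). On the other hand \eqn{isoub} shows $\min_{|U|=k}|\partial_E U|/|U|\le d-2+2/k$, so $i_{E,u}(d)\le d-2$ for every $u>0$, and in particular $\inf_{0<u\le u_0}\widehat A_d(u)\le d-2$ (since $\widehat A_d(u)$ is a lower bound for $I_{E,u}(d)\le i_{E,u}(d)$ as $u\to u$, and hence cannot exceed $d-2$). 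Therefore the minimum is attained by the second term and $i_{E,u_0}(d)\ge\inf_{0<u\le u_0}\widehat A_d(u)$, as claimed.

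The only mild obstacle is the monotonicity-in-$y$ remark used in part (a): one must check that for fixed $u$, $\widehat f_d(u,\cdot)$ increases up to its unique maximiser so that bounding $\widehat f_d(u,y_0)$ controls $\widehat f_d(u,y)$ for all $y\le y_0$ on the relevant range. This is immediate from the strict concavity of $\widehat f_d$ in $y$ on $[0,du)$ (already recorded in the excerpt via ${\rm d}^2\widehat f_d/{\rm d}y^2<0$) together with $\lim_{y\to0^+}\widehat f_d(u,y)<0$; since this is exactly the analogue of the corresponding step in Lemma~\ref{maxf}, and the paper explicitly says the proofs are omitted because they are very similar, nothing deeper is needed.
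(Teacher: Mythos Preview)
Your proposal follows exactly the route the paper indicates (``analogous to Lemma~\ref{maxf}''): first-moment bound in the pairing model via~\eqn{edgeexp}, transfer to ${\cal G}_{n,d}$, small sets property for $u\to 0^+$, and then Lemma~\ref{l:eparam} combined with~\eqn{isoub} for part~(b). The structure and level of detail are appropriate, and the concavity observation you use to control $\widehat f_d(u,y)$ uniformly for $y\le y_0$ is the natural replacement for the monotonicity of $h_d(u,s)$ in $s$ used in Lemma~\ref{maxf}.

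One small slip in part~(b): you write ``$\widehat A_d(u)$ is a lower bound for $I_{E,u}(d)\le i_{E,u}(d)$''. The inequality $I_{E,u}(d)\le i_{E,u}(d)$ goes the wrong way, since $i_{E,u}$ minimises over a larger collection of sets and hence $i_{E,u}(d)\le I_{E,u}(d)$. What you actually need (and what makes your conclusion $\widehat A_d(u)\le d-2$ valid) is that~\eqn{isoub} bounds $\min_{|U|=k}|\partial_E U|/|U|$ directly, so $I_{E,u}(d)\le d-2$ follows without passing through $i_{E,u}$. With that correction the chain $\widehat A_d(u)\le I_{E,u}(d)\le d-2$ stands, and the rest of your argument for~(b) goes through unchanged.
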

\begin{proof}
The proof is analogous to that of Lemma~\ref{maxf}.  
(Note that the small sets property, as discussed at 
\eqn{smallsets} and \eqn{isoub}, also holds for edge expansion.)
\end{proof}

As we now prove, $\widehat A_d(u)$ is, in fact, a lower bound  
for $i_{E,u}(d)$ for all $d\ge3$ and $0<u\le1/2$.

\begin{proof}[Proof of Theorem~\ref{t:eThm}]
Fix $d\ge3$ and $0<u\le1/2$. As it was in the proof of 
Proposition~\ref{p:mainvertex}, we will consider $0<w\le u$ 
and  parameterize the variable $y$ as $rw$, where $r$ is a 
new variable. Put 
\[
{\cal B}=\left\{(r,w):0\le r< \widehat  A_d(u),0<w\le u\right\}.
\]
Once we show $g_d(r,w)=\widehat f_d(w,rw)<0$ over ${\cal B}$, 
the theorem will follow by 
applying 
Lemma~\ref{l:emaxf} and 
inequality \eqn{edgeexp}. Partition ${\cal B}$ as follows:
\begin{align*}
{\cal B}_1&={\cal B}\cap 
\left\{(r,w):w\leq W_{r,d}\mbox{  or }r\leq R_{w,d}\right\},\\
{\cal B}_2&={\cal B}\setminus {\cal B}_1,
\end{align*}
where 
\begin{align*}
W_{r,d}&=\frac{d(d-2-r)}{(d-2)(d+r)},\\
R_{w,d}&=\frac{d(d-2)(1-w)}{d+(d-2)w}.
\end{align*}
We have 
\[
\frac{{\rm d}^{2}g_{d}(r,w)}{{\rm d}w^{2}}=\frac{\eta(d,r,w)}{\zeta(d,r,w)},
\]
where
\begin{align*}
\eta(d,r,w)&=(1-w)d^2-(2(1-w)+r(1+w))d+2rw,\\
\zeta(d,r,w)&=2w(d-dw-rw)(1-w).
\end{align*}
As $w\leq u\leq1/2$ and $r<du$, $\zeta(d,r,w)>0$. Also, we have
\[
\eta(d,r,W_{r,d})=\eta(d,R_{w,d},w)=0,
\]
and if $w<W_{r,d}$ or $r<R_{w,d}$, then $\eta(d,r,w)>0$. 
Hence, over ${\cal B}_1$, $g_d(r,w)$ is convex in $w$ for any fixed $r$.

Regarding ${\cal B}_2$, we know that 
\[
\frac{{\rm d}^{2}g_d(r,w)}{{\rm d}w^{2}}<0
\] 
and so ${\rm d}g_{d}(r,w)/{\rm d}w$ is decreasing in $w$. 
Thus, to show that $g_d(r,w)$ for a fixed $r$ is increasing in $w$  
over ${\cal B}_2$, it suffices to show that 
\[
\theta_{u}(r)=\frac{{\rm d}g_{d}(r,w)}{{\rm d}w}|_{w=u}>0
\] 
for all $R_{u,d}\le r\le d(1-u)$, since $R_{w,d}$ is decreasing in 
$w$ and $\widehat A_{d,u}<d(1-u)$. 
First, observe that
\[
\frac{{\rm d}^{2}\theta_{u}(r)}{{\rm d}r^{2}}
=\frac{d^2(1-2u^2)r-2d^3(1-2u+u^2)}{2r(d-r)(d-du-ru)^2}<0
\]
for $0<r<2d(1-2u+u^2)/(1-2u^2)$. 
So as
\[
\frac{2d(1-2u+u^2)}{(1-2u^2)}-d(1-u)=\frac{d(1-u)(2u^2-2u+1)}{1-2u^2}>0,
\]
$\theta_{u}$ is strictly concave in $r$ over the interval in question. 
Thus we check that $\theta_{u}$ is positive at the endpoints. 
The right endpoint is positive since, after some simple manipulations, we see that
\[
\frac{{\rm d}\theta_{u}(d(1-u))}{{\rm d}u}=-\frac{2}{2u(1-u)}<0
\]
and 
\[
\theta_{1/2}(d(1-1/2))=0.
\]
For the left endpoint, observe that 
\[
\theta_{u}(R_{u,d})
=(d-1)\log\left(\frac{u}{1-u}\right)
+\frac{R_{u,d}}{2}\log\varphi_u(d)
+\frac{d}{2}\log\psi_u(d),
\]
where
\begin{align*}
\varphi_u(d)&=\frac{(d-du-R_{u,d}u)(du-R_{u,d}u)}{(R_{u,d}u)^2}
=\frac{2d((d-2)u+1)}{u(1-u)(d-2)^2},\\
\psi_u(d)&=\frac{d-du-R_{u,d}u}{du-R_{u,d}u}
=\frac{d(1-u)}{2u((d-2)u+1)}.
\end{align*}
Hence, after some simplifications, we find that 
\[
\frac{{\rm d}^2 \theta_{u}(R_{u,d})}{{\rm d}d^2}=
\frac{\delta_u(d)\log\varphi_u(d)
+\gamma_u(d)}{d(d-2)((d-2)u+1)((d-2)u+d)^3},
\]
where
\begin{align*}
\delta_u(d)=&-4du(1-u)(d-2)((d-2)u+1),\\
\gamma_u(d)=&((d-2)u+d)\big((4u^2-u-1)d^2-2(6u^2-4u+1)d-4u(1-2u)\big).
\end{align*}
Note that the coefficients of the second term in $\gamma_u(d)$ 
are nonpositive for all $0<u\le1/2$.
Hence, observe that $\delta_u(d)<0$ and $\gamma_u(d)<0$ 
for all $d\ge3$ and $0<u\le1/2$.
Note that since 
\[
\frac{{\rm d}\varphi_u(d)}{{\rm d}d}
=-\frac{2((1+2u)d+2(1-2u))}{u(1-u)(d-2)^3}<0
\]
and
\[
\lim_{d\to\infty}\varphi_u(d)=\frac{2}{1-u}\ge 2,
\]
we have that $\varphi_u(d)\ge2$ for all $d\ge3$ and $0<u\le1/2$.
We thus conclude that 
${\rm d}\theta_u(R_{u,d})/{\rm d} d$ is decreasing 
in $d$ for any fixed $0<u\le1/2$. 
Observe that
\[
\lim_{d\to\infty}\frac{{\rm d} \theta_{u}(R_{u,d})}{{\rm d}d}
=-\frac{\log(1-u)+u\log2}{1+u}>0
\]
for all $0<u\le1/2$. 
Therefore $\theta_u(R_{u,d})$ is increasing in $d$, and thus, since
\[
\theta_{u}(R_{u,3})\ge\theta_{1/2}(R_{1/2,3})\approx 0.768
\]
for all $0<u\le1/2$,
we conclude that $\theta_{u}(R_{u,d})\ge0$ for all relevant $d$ and $u$. 
Altogether, $g_d(r,w)$ is increasing in $w$ over ${\cal B}_2$ for any fixed $r$.

Finally, for any $0\leq r\le d(1-u)$, it is easily seen that
\[
\lim_{w\to0^{+}}g_{d}(r,w)=0.
\]

Altogether, $i_{E,u}(d)\ge \widehat A_d(u)$.
\end{proof}

Approximate values for $\widehat  A_d(u)$ are listed in Table~\ref{etable}.

\begin{table}[!h]\footnotesize
\centering
\caption{Approximate values for $\widehat  A_d(u)$. 
By Theorem~\ref{t:eThm}, these
are approximate lower bounds for the $u$-edge 
isoperimetric number $i_{E,u}(d)$.}
\vspace{-0.2cm}
\begin{tabular}{c|ccccccc}
\toprule
$u$ & $\approx \widehat A_3(u)$ & $\approx\widehat A_4(u)$ & $\approx\widehat A_5(u)$ 
& $\approx\widehat A_{10}(u)$ & $\approx\widehat A_{25}(u)$ & $\approx\widehat A_{50}(u)$  
& $\approx\widehat A_{100}(u)$  \\
\hline
$0.01$ & 0.57080 & 1.29152 & 2.07102 & 6.31585 & 20.00259 & 43.58306 & 91.53259  \\
$0.05$ & 0.46150 & 1.06879 & 1.73912 & 5.49362 & 17.96765 & 39.83142 & 84.74426  \\
$0.10$ & 0.39850 & 0.93300 & 1.52904 & 4.91775 & 16.36950 & 36.65008 & 78.56994  \\
$0.15$ & 0.35544 & 0.83739 & 1.37806 & 4.48034 & 15.07700 & 33.96870 & 73.17830  \\
$0.20$ & 0.32140 & 0.76038 & 1.25487 & 4.11019 & 13.93791 & 31.54266 & 68.18908  \\
$0.25$ & 0.29262 & 0.69435 & 1.14821 & 3.78107 & 12.89392 & 29.27612 & 63.45361 \\
$0.30$ & 0.26728 & 0.63557 & 1.05254 & 3.47967 & 11.91538 & 27.12097 & 58.89526  \\
$0.35$ & 0.24435 & 0.58192 & 0.96469 & 3.19830 & 10.98467 & 25.04690 & 54.46794  \\
$0.40$ & 0.22318 & 0.53205 & 0.88263 & 2.93177 & 10.08979 & 23.03451 & 50.13946  \\
$0.45$ & 0.20332 & 0.48501 & 0.80492 & 2.67658 & 9.22247 & 21.06947 & 45.88731  \\
$0.50$ & 0.18447 & 0.44011 & 0.73051 & 2.43002 & 8.37615 & 19.14025 & 41.69360   \\
\bottomrule
\end{tabular}
\label{etable}
\end{table}

With Theorem~\ref{t:eThm} in hand, we can derive lower bounds on 
the asymptotics of $i_{E,u}(d)$ as $d\to\infty$.

\begin{proof}[Proof of Corollary~\ref{c:eCor}]
Put 
\[
\psi(u)=2(1-u)\sqrt{\log(u^{-u}(1-u)^{u-1})}.
\]
For any $c>0$ we have
\[
\widehat f_d(u,u(d(1-u)-c\sqrt{d}))-\log(u^{-u}(1-u)^{u-1})=
\frac{cu\sqrt{d}}{2}\log\mu_d(u,c)-\frac{d}{2}\log\nu_d(u,c),
\]
where
\begin{align*}
\mu_d(u,c) 
&=\frac{(du(1-u)-cu\sqrt{d})^2}{(du^2+cu\sqrt{d})(d(1-u)^2+cu\sqrt{d})},\\
\nu_d(u,c) 
&= 
\left(1-\frac{c}{(1-u)\sqrt {d}}\right)^{2u(1-u)}
\left(1+\frac{c}{u \sqrt{d}}\right)^{u^2}
\left(1+\frac{cu}{(1-u)^2\sqrt{d}}\right)^{(1-u)^2}.
\end{align*}
Observe that
\[
\lim_{d\to\infty}\frac{cu\sqrt{d}}{2}\log\mu_d(u,c)
=-\frac{c^2}{2(1-u)^2}
\]
and
\[
\lim_{d\to\infty}\frac{d}{2}\log\nu_d(u,c)=-\frac{c^2}{4(1-u)^2}.
\]
Hence
\[\lim_{d\to\infty}\widehat f_d(u,u(d(1-u)-c\sqrt{d}))
=-\frac{1}{4(1-u)^2}(c-\psi(u))(c+\psi(u)),
\]
and thus, for any $\epsilon>0$ and sufficiently large $d$,
\[
\widehat A_d(u)\ge d(1-u)-(\psi(u)+\epsilon)\sqrt{d}.
\]
Applying Theorem~\ref{t:eThm}, the result is obtained.
\end{proof}

Note that in the case $u=1/2$, the lower bound of 
Corollary~\ref{c:eCor} agrees with the lower bound at \eqn{eAsymt}.

\section{Acknowledgments}
BK was supported by NSERC of Canada.
NW carried out this research at the 
Department of Combinatorics \& Optimization, 
University of Waterloo, and was supported by the 
Canada Research Chairs program
and NSERC of Canada.
This project began during a visit by BK to the University of Waterloo.
BK would like to thank NW for support and hospitality during this time.


\end{document}